\definecolor{myBlue}{RGB}{66,103,135}
\definecolor{myLightBlue}{RGB}{169,205,244}
\definecolor{myRed}{RGB}{242,95,93}
\definecolor{myOrange}{RGB}{249,154,90}
\definecolor{myTan}{RGB}{254,234,173}
\theoremstyle{plain}
\newtheorem{thm}{Theorem}[section]
\newtheorem{cor}[thm]{Corollary}
\newtheorem{prop}[thm]{Proposition}
\theoremstyle{definition}
\newtheorem{defn}[thm]{Definition}
\newtheorem{eg}[thm]{Example}
\theoremstyle{remark}
\newtheorem{remark}[thm]{Remark}
\title{Superoptimal continued fractions}
\author{Slade Sanderson}
\address{Universit\'e Paris Cit\'e, CNRS, IRIF, F-75013, Paris, France}
\email{slade.sanderson@irif.fr}
\date{\today}
\subjclass[2020]{11K50 (Primary) 11J70; 37A44 (Secondary)}
\def\G{
\mathcal{G}
}
\begin{document}

\begin{abstract}
Motivated by the optimal continued fractions studied independently by Selenius and Bosma, we define and introduce algorithms producing superoptimal continued fraction expansions of irrationals.  The convergents of these expansions simultaneously provide arbitrarily good rational approximations and converge arbitrarily quickly.
\end{abstract}

\maketitle
\tikzset{->-/.style={decoration={markings,mark=at position #1 with {\arrow{>}}},postaction={decorate}}}

\section{Introduction}\label{Introduction}

Every irrational number $x$ has a unique \emph{regular continued fraction} ({\sc rcf}) expansion
\begin{equation}\label{rcf_expn}
x=[a_0;a_1,a_2,\dots]=a_0+\cfrac{1}{a_1+\cfrac{1}{a_2+\ddots}},
\end{equation}
where each \emph{partial denominator} $a_n$ is an integer, and $a_n$ is positive for $n>0$.  This expansion is interpreted as the limit of the \emph{{\sc rcf}-convergents} of $x$, which are the (reduced) rationals
\begin{equation}\label{convergents}
\frac{p_n}{q_n}=[a_0;a_1,a_2,\dots,a_n]=a_0+\cfrac{1}{a_1+\cfrac{1}{\ddots+\cfrac{1}{a_n}}}
\end{equation}
obtained by truncating the expression in \eqref{rcf_expn} after finitely many partial denominators.  Conversely, for any sequence of integers $(a_n)_{n\ge 0}$ with $a_n$ positive for $n>0$, the rationals defined by \eqref{convergents} converge to a unique irrational $x$; see, e.g., \cite{K97}.  The partial denominators of {\sc rcf}-expansions are obtained algorithmically via the \emph{Gauss map} $G:[0,1)\to[0,1)$ defined by $G(0)\coloneqq 0$ and $G(x)\coloneqq 1/x-\lfloor 1/x\rfloor$ for $x\neq 0$.  Indeed, one finds that $a_0=\lfloor x\rfloor$ and $a_n=\lfloor 1/G^{n-1}(x_0)\rfloor$ for all $n>0$, where $x_0\coloneqq x-a_0$.  Moreover, $G$ acts as a one-sided shift on {\sc rcf}-expansions: if $x=[0;a_1,a_2,\dots]$, then $G(x)=[0;a_2,a_3,\dots]$.  The \emph{Gauss measure} $\nu_G$ is the ergodic (in fact, exact), absolutely continuous, $G$-invariant probability measure with density $1/(\log(2)(1+x))$.  Together with the pointwise ergodic theorem, the Gauss measure reveals many asymptotic properties of generic {\sc rcf}-expansions; see \cite{DK2002B}.

Despite its immense usefulness for describing generic {\sc rcf}-expansions, it turns out that the \emph{natural extension} of the one-dimensional dynamical system\footnote{Throughout, $\mathcal{B}$ denotes the Borel $\sigma$-algebra on the appropriate domain.} $([0,1),\mathcal{B},\nu_G,G)$ is better-suited for the study of \emph{{\sc rcf}-approximation coefficients}.  The approximation coefficient of an irrational $x$ and a reduced rational $p/q$, defined by
\begin{equation}\label{approx_coeffs}
\Theta(x,p/q)\coloneqq q^2|x-p/q|,
\end{equation}
indicates how well $x$ is approximated by $p/q$, relative to the size of the denominator $q$.  The limiting distribution of $\Theta(x,p_n/q_n)$---conjectured independently by Doeblin and Lenstra---was obtained in \cite{BJW83} by using the natural extension $(\Omega,\mathcal{B},\bar\nu_G,\mathcal{G})$ of $([0,1),\mathcal{B},\nu_G,G)$ introduced by Nakada in \cite{N1981}.  Here, $\Omega\coloneqq [0,1)\times [0,1]$; $\G:\Omega\to\Omega$ is defined by $\G(z)=z$ if $z=(0,y)$ and
\begin{equation}\label{NE_G}
\G(z)=\left(\frac{1}{x}-\left\lfloor \frac1x\right\rfloor,\frac{1}{\left\lfloor \frac1x\right\rfloor+y}\right)
\end{equation}
for $z=(x,y)$ with $x\neq 0$; and $\bar\nu_G$ is the ergodic, absolutely continuous, $\G$-invariant probability measure with density $1/(\log(2)(1+xy)^2)$.  The natural extension records both the `future' and `past' of $G$-orbits, and $\G$ acts as a two-sided shift on {\sc rcf}-expansions: if $z=(x,y)$ with $x=[0;a_1,a_2,\dots]$ irrational and $y$ has (finite or infinite) {\sc rcf}-expansion $[0;b_1,b_2,\dots]$, then $\G(x,y)=([0;a_2,a_3,\dots],[0;a_1,b_1,b_2,\dots])$.  Geometrically, $\G$ maps the interior of the vertical rectangle $V_a\coloneqq((1/(a+1),1/a]\times [0,1])\cap \Omega$ diffeomorphically to the interior of the horizontal rectangle $H_a\coloneqq[0,1)\times (1/(a+1),1/a]$ for each integer $a\ge 1$; see Figure \ref{GaussNE-fig}.

\begin{figure}[t]
\centering
\includegraphics[width=.3\textwidth]{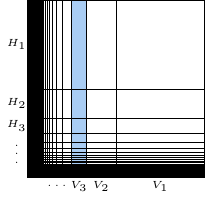}
\hspace{20pt}
\includegraphics[width=.3\textwidth]{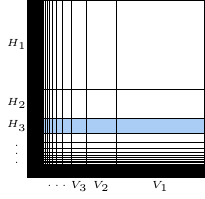}
\caption{The map $\G$ sends the interior of $V_a$ diffeormorphically to the interior of $H_a$.}
\label{GaussNE-fig}
\end{figure}

There are several ways of generalising the notion of {\sc rcf}s introduced above.  In this article, a \emph{generalised continued fraction} ({\sc gcf}) is a formal expression
\begin{equation}\label{gcf}
[\beta_0;\alpha_0/\beta_1,\alpha_1/\beta_2,\dots]=\beta_0+\cfrac{\alpha_0}{\beta_1+\cfrac{\alpha_1}{\beta_2+\ddots}}\ ,
\end{equation}
where the \emph{partial numerators} $\alpha_n$ and \emph{partial denominators} $\beta_n$ are rationals with $\alpha_n\neq 0$.  The \emph{$n^\text{th}$ convergent} of $[\beta_0;\alpha_0/\beta_1,\alpha_1/\beta_2,\dots]$ is the extended rational number
\[\frac{P_n}{Q_n}=[\beta_0;\alpha_0/\beta_1,\dots,\alpha_{n-1}/\beta_n]\coloneqq \beta_0+\cfrac{\alpha_0}{\beta_1+\cfrac{\alpha_1}{\ddots +\cfrac{\alpha_{n-1}}{\beta_n}}}\in \mathbb{Q}\cup\{\infty\}, \quad n\ge 0\]
(when $n=0$, this is $P_0/Q_0=[\beta_0]\coloneqq \beta_0$).  Throughout, we shall assume that $P_n/Q_n\in\mathbb{Q}$ is reduced.  As opposed to {\sc rcf}s, {\sc gcf}s need not converge, i.e., $\lim_{n\to\infty}P_n/Q_n$ might not exist.  If this limit does exist and equals $x\in\mathbb{R}$, we call \eqref{gcf} a \emph{{\sc gcf}-expansion} of $x$ and write $x=[\beta_0;\alpha_0/\beta_1,\alpha_1/\beta_2,\dots]$.  

\begin{remark}\label{remark: equivalent_gcf}
Successively multiplying each triple $\alpha_n,\beta_{n+1},\alpha_{n+1}$ by an appropriate integer, one may obtain an \emph{equivalent} {\sc gcf}---i.e., a {\sc gcf} with identical convergents---whose partial numerators and partial denominators $\alpha_n$, $\beta_{n+1}$, $n\ge 0$, are integers; see \cite{S1855}.  Our use of rational digits is explained in Remark \ref{remark: rational_digits} below.
\end{remark}

In the dynamical theory of continued fractions, several generalisations and adaptations of the Gauss map produce specific types of {\sc gcf}s---called \emph{semi-regular continued fractions} ({\sc srcf}s)---which are known to converge; see, e.g., the parameterised families \cite{K1991,N1981}, which contain many well-studied {\sc srcf}-algorithms.  A {\sc srcf} is a {\sc gcf} as in \eqref{gcf} satisfying (i) $\alpha_n=\pm 1$ for $n\ge 0$, (ii) $\beta_n\in\mathbb{Z}$, $n\ge 0$, with $\beta_n>0$ for $n>0$, and (iii) $\alpha_n+\beta_n\ge 1$ for all $n>0$, with this final inequality strict infinitely often.  It is known that for a.e.\footnote{All \emph{almost every} statements are with respect to Lebesgue measure.}~irrational $x$, any {\sc srcf}-expansion of $x$ with reduced convergents $P_k/Q_k$ satisfies 
\begin{enumerate}
\item[(i)] $\sup_{k\ge 0} \Theta(x,P_k/Q_k)\ge 1/2$ and
\item[(ii)] $\limsup_{k\to\infty} n(k)/k\le \log 2/\log \varphi$,   
\end{enumerate}
where $n(k)$ is such that $q_{n(k)}\le Q_k<q_{n(k)+1}$ with $p_n/q_n$ the $n^{\text{th}}$ {\sc rcf}-convergent of $x$, and $\varphi=(\sqrt{5}+1)/2$ is the golden ratio; see Remarks 2.19 and 3.16 of \cite{B1987}.

In \cite{B1987}, Bosma introduces an algorithm producing so-called \emph{optimal continued fractions}\footnote{Bosma writes that these expansions were originally given by Selenius in \cite{S1960}, where they were constructed using the {\sc rcf}-expansion of $x$.  Bosma's algorithm does not require prior knowledge of the {\sc rcf}-expansion; see also Remark \ref{no_rcf_needed} below.} ({\sc ocf}s) which provide `best approximations' and the `fastest convergence' among {\sc srcf}s: the {\sc ocf}s satisfy both $\Theta(x,P_k/Q_k)<1/2$ for all $k\ge 0$, and $\lim_{k\to\infty} n(k)/k=\log 2/\log \varphi$ a.s.  Note that the {\sc ocf}s are `optimal' (in the sense of (i) and (ii) above) only among {\sc srcf}s, but not necessarily among {\sc gcf}-expansions.  Motivated by these considerations, we make the following:
\begin{defn}\label{defn: socf}
Let $\varepsilon, C>0$.  A {\sc gcf}-expansion of $x$ with reduced convergents $P_k/Q_k$ is called \emph{$(\varepsilon,C)$-superoptimal} if both
\begin{enumerate}
\item[(i)] $\Theta(x,P_k/Q_k)\le \varepsilon$ for all $k\ge 0$ and
\item[(ii)] $\liminf_{k\to\infty} n(k)/k\ge C$, 
\end{enumerate}
where $q_{n(k)}\le Q_k<q_{n(k)+1}$ with $p_n/q_n$ the $n^\text{th}$ {\sc rcf}-convergent of $x$.
\end{defn}

We remark that for small $\varepsilon$, condition (i) says that the convergents $P_k/Q_k$ are `extremely good approximations' to $x$, and for large $C$, condition (ii) says that the convergents approach $x$ `extremely quickly' (relative to {\sc rcf}-convergents).  The goal of this article is to introduce---for any choice of $\varepsilon$ and $C$---a whole family of dynamical systems $(\Delta,\mathcal{B},\bar\nu_\Delta,\G_\Delta)$ parameterised by subregions $\Delta\subset\Omega$, each of which produces an $(\varepsilon,C)$-\emph{superoptimal continued fraction} ({\sc socf}) expansion of a.e.\footnote{For $\varepsilon\ge 1/\sqrt{5}$, one can construct explicit $(\Delta,\mathcal{B},\bar\nu_\Delta,\G_\Delta)$ which produce \emph{for all} irrationals in $(0,1)$ a {\sc gcf}-expansion satisfying condition (i) of Definition \ref{defn: socf}; see \S\ref{Hurwitz--Borel continued fractions} below.  A result of Hurwitz (Theorem \ref{thm: Hurwitz}) necessitates the use of `a.e.' for smaller values of $\varepsilon$; see also Remark \ref{lagrange_spectrum}.}~$x\in(0,1)$; see Theorem \ref{eps_C_so_thm} below.  

Our construction may viewed as a generalisation of that underlying Kraaikamp's \emph{$S$-expansions}, which use certain induced transformations of the natural extension $(\Omega,\mathcal{B},\bar\nu_G,\mathcal{G})$ to govern \emph{singularisations} of {\sc rcf}-expansions (\cite{K1991}).  Singularisation is an old, arithmetic procedure which sometimes allows one to transform a given {\sc srcf}-expansion into a new {\sc srcf}-expansion whose convergents are a subsequence of the original convergents.  In \cite{K1991}, Kraaikamp fixes a measurable \emph{singularisation area} $S\subset\Omega$, considers the $\G$-orbit of $z=(x,0)$, and---via singularisation---produces a {\sc srcf}-expansion of $x$ whose convergents are precisely the subsequence of {\sc rcf}-convergents $p_n/q_n$ for which $\G^{n}(z)\in \Delta_S\coloneqq \Omega\backslash S$.  While Kraaikamp's $S$-expansions define a large collection of well-studied and new continued fraction algorithms---including {\sc ocf}s---they are limited by certain restrictions imposed by singularisation.  In particular, beginning with an {\sc rcf}-expansion, singularisation only allows one to remove $p_n/q_n$ if $a_{n+1}=1$, and two consecutive convergents $p_n/q_n,$ $p_{n+1}/q_{n+1}$ may not be removed.  These two restrictions impose corresponding restrictions on the singularisation areas $S\subset \Omega$, and hence on the regions $\Delta_S$ that one induces on.  

The algorithms underlying our $(\varepsilon,C)$-{\sc socf}s follow the same strategy as $S$-expansions, but replace singularisation with Seidel's more powerful acceleration technique of \emph{contraction} (\cite{S1855}).  Under mild assumptions (which are satisfied for {\sc rcf}s), contraction produces from a given {\sc gcf} a new, explicit {\sc gcf} whose convergents are \emph{any} prescribed subsequence of the original convergents.  Contraction thus relieves us from the aforementioned restrictions imposed on singularisation areas and allows one to use induced transformations of $(\Omega,\mathcal{B},\bar\nu_G,\mathcal{G})$ on \emph{any} positive-measure subregion $\Delta\subset\Omega$ to govern accelerations of {\sc rcf}-expansions.   

\begin{remark}
This coupling of induced transformations with contraction was first applied in \cite{DKS2026} in the setting of Shunji Ito's natural extension of the Farey tent map (\cite{I1989}).  There is an isomorphic copy of the natural extension $(\Omega,\mathcal{B},\bar\nu_G,\mathcal{G})$ sitting within Ito's natural extension (Theorem 1 of \cite{BY1996}), so the methods of the current paper may be applied within the broader framework of \cite{DKS2026}; see section 1.9 of 
\cite{S2025}.  However, a classic result of Legendre (Theorem \ref{thm: Legendre} below) states that $\Theta(x,p/q)<1/2$ implies $p/q$ is some {\sc rcf}-convergent of $x$, so for small $\varepsilon$, the convergents of an $(\varepsilon,C)$-{\sc socf} are bound to be a subsequence of {\sc rcf}-convergents.  Thus we find it natural to describe our {\sc socf}s solely within $(\Omega,\mathcal{B},\bar\nu_G,\mathcal{G})$. 

Applying the same techniques of this paper to the natural extension of the Farey tent map, one could---in addition to the {\sc socf}s produced here---construct for any large $\varepsilon$ and small $C$ an `$(\varepsilon,C)$-suboptimal' continued fraction expansion of a.e.~$x$, where the inequalities in Definition \ref{defn: socf} are reversed and the $\liminf$ is replaced by $\limsup$.
\end{remark}

This article is organised as follows.  We begin in \S\ref{Matrix notation and contraction} by establishing matrix notation and recalling the contraction technique of Seidel.  The induced transformations $(\Delta,\mathcal{B},\bar\nu_\Delta,\G_\Delta)$ are defined in \S\ref{Induced transformations and equidistribution}, and in \S\ref{Inducing contractions of rcfs} we use these to govern contractions of {\sc rcf}-expansions.  Moreover, we show that the partial numerators and partial denominators of the resulting {\sc gcf}s are determined directly from the corresponding system $(\Delta,\mathcal{B},\bar\nu_\Delta,\G_\Delta)$ without needing to know the {\sc rcf}-expansion (Proposition \ref{prop: induced_determines_cf_alg}).  In \S\ref{Superoptimal continued fractions}, we prove our main result (Theorem \ref{eps_C_so_thm}), which gives sufficient criteria on $\Delta$ to guarantee that for a.e.~$x$, the resulting {\sc gcf}-expansion of $x$ is $(\varepsilon,C)$-superoptimal.  We end in \S\ref{Examples} by investigating two specific families of induced systems and the {\sc socf}s they produce.  

\bigskip

{\flushleft \textbf{Acknowledgments.}} The author thanks Karma Dajani, Cor Kraaikamp and the anonymous referee for helpful suggestions which improved this article.  This work is part of project number 613.009.135 of the research programme Mathematics Clusters which is financed by the Dutch Research Council (NWO).

\bigskip

\medskip

\section{Inducing contractions of regular continued fractions}\label{Inducing contractions of regular continued fractions}

\subsection{Matrix notation and contraction}\label{Matrix notation and contraction}

Let $a:(0,1)\to\mathbb{N}$ and $M:(0,1)\to\mathrm{GL}_2\mathbb{Z}$ be defined by 
\begin{equation}\label{a_and_M}
a(x)\coloneqq \lfloor 1/x\rfloor \qquad \text{and} \qquad M(x)\coloneqq \begin{pmatrix}0 & 1\\ 1 & a(x)\end{pmatrix}.
\end{equation}
For $x\in(0,1)$ irrational, set $x_n\coloneqq G^n(x)$ and $a_{n+1}(x)\coloneqq a(x_n)$, $n\ge 0$; when $x$ is understood, we use the suppressed notation $a_n=a_n(x)$, $n>0$.  Moreover, set 
\[M_{n+1}(x)\coloneqq M(x_n)=\begin{pmatrix}0 & 1\\ 1 & a_{n+1}\end{pmatrix}, \quad n\ge 0;\]
similarly here, if $x$ is understood, we write $M_n=M_n(x)$, $n>0$.  

Notice that $x_{n+1}=M_{n+1}^{-1}\cdot x_n,$ $n\ge 0$, where for $M=\left(\begin{smallmatrix}a & b\\ c & d\end{smallmatrix}\right)\in\mathrm{GL}_2\mathbb{Z}$, $M\cdot x=(ax+b)/(cx+d)$ denotes the action of $M$ as a M\"obius transformation.  We thus find $x_n=M_{n+1}\cdot x_{n+1}$, $n\ge 0$, and hence $x=M_1M_2\cdots M_n\cdot x_n$, $n>0$.  For $1\le m\le n$, set
\begin{equation}\label{M_{[m,n]}}
M_{[m,n]}(x)\coloneqq M_m(x)M_{m+1}(x)\cdots M_n(x)=M(x_{m-1})M(x_m)\cdots M(x_{n-1});
\end{equation}
when $x$ is understood, write $M_{[m,n]}=M_{[m,n]}(x)$.  Denote the entries of $M_{[m,n]}(x)$ by 
\[\begin{pmatrix}
r_{[m,n]}(x) & p_{[m,n]}(x)\\
s_{[m,n]}(x) & q_{[m,n]}(x)
\end{pmatrix}
\coloneqq M_{[m,n]}(x), \quad 1\le m\le n,\]
or, when $x$ is understood, $u_{[m,n]}=u_{[m,n]}(x)$ for $u\in \{r,s,p,q\}$.  When $m<n$, one finds from the relation $M_{[m,n]}=M_{[m,n-1]}M_n$ that $r_{[m,n]}=p_{[m,n-1]}$ and $s_{[m,n]}=q_{[m,n-1]}$.  Setting $p_{[m,m-1]}\coloneqq 0$ and $q_{[m,m-1]}\coloneqq 1$, we thus have
\[M_{[m,n]}=\begin{pmatrix}
p_{[m,n-1]} & p_{[m,n]}\\
q_{[m,n-1]} & q_{[m,n]}
\end{pmatrix}, \quad 1\le m\le n.
\]
When $m=1$, we use the special notation $p_n(x)\coloneqq p_{[1,n]}(x)$ and $q_n(x)\coloneqq q_{[1,n]}(x)$, $n\ge 0$, and when $x$ is understood, we set $p_n=p_n(x)$ and $q_n=q_n(x)$.  The digits $a_n$ and rationals $p_n/q_n$ are the partial denominators and the convergents, respectively, of the {\sc rcf}-expansion of $x$ from \eqref{rcf_expn} and \eqref{convergents}.

Recall the definition of a {\sc gcf} from \eqref{gcf}.  While mentioned in, e.g., \cite{LW2008,P1950}, the following definition and theorem---which are central to the remainder of this article---seem to be little-used in full generality; see, e.g., \cite{B2014,DKS2026}.
\begin{defn}\label{defn: contraction}
The \emph{contraction} of an irrational $x=[a_0;a_1,a_2,\dots]$ with respect to a strictly increasing sequence of non-negative integers $(n_k)_{k\ge 0}$ is the {\sc gcf} $[\beta_0;\alpha_0/\beta_1,\alpha_1/\beta_2,\dots]$, with
\[\alpha_{k}=\frac{(-1)^{n_k-n_{k-1}+1}q_{[n_{k-2}+2,n_{k-1}]}}{q_{[n_{k-1}+2,n_k]}}
\qquad \text{and} \qquad 
\beta_k=\frac{q_{[n_{k-2}+2,n_k]}}{q_{[n_{k-1}+2,n_k]}},
\quad k\ge 0,
\]
where $n_k\coloneqq k$ for $k<0$, $q_{[0,-1]}\coloneqq 1$ and $q_{[0,n_0]}\coloneqq p_{[1,n_0]}=p_{n_0}$.
\end{defn}

\begin{thm}[Seidel, 1855 \cite{S1855}]\label{thm: seidel}
Let $x$ be an irrational with {\sc rcf}-convergents $p_n/q_n$.   Then the contraction of $x$ with respect to $(n_k)_{k\ge 0}$ is a {\sc gcf}-expansion of $x$ with convergents $P_k/Q_k=p_{n_k}/q_{n_k}$, $k\ge 0$.
\end{thm}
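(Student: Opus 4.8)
The plan is to verify Seidel's theorem by a direct matrix computation, exploiting the fact that the convergents $P_k/Q_k$ of a {\sc gcf} $[\beta_0;\alpha_0/\beta_1,\alpha_1/\beta_2,\dots]$ are governed by the recursion encoded in the $2\times 2$ matrices $N_k \coloneqq \left(\begin{smallmatrix} 0 & \alpha_{k-1} \\ 1 & \beta_k \end{smallmatrix}\right)$, so that $\left(\begin{smallmatrix} P_{k-1} & P_k \\ Q_{k-1} & Q_k \end{smallmatrix}\right) = \left(\begin{smallmatrix} \beta_0 & 1 \\ 1 & 0 \end{smallmatrix}\right) N_1 N_2 \cdots N_k$ (with suitable conventions for $k=0$). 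The goal is to show that this product, after clearing the rational entries by an appropriate diagonal scaling, agrees with $M_{[1,n_k]}(x) = \left(\begin{smallmatrix} p_{n_k-1} & p_{n_k} \\ q_{n_k-1} & q_{n_k} \end{smallmatrix}\right)$ up to a column-dependent scalar — which is exactly what is needed for $P_k/Q_k = p_{n_k}/q_{n_k}$.

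First I would set up the telescoping identity $x = M_{[1,n_0]} \cdot x_{n_0} = M_{[1,n_k]}\cdot x_{n_k}$ and, more to the point, record that $M_{[1,n_k]} = M_{[1,n_{k-1}]}\, M_{[n_{k-1}+1,\,n_k]}$, so the whole computation reduces to understanding a single ``block'' $M_{[m+1,n]}$ for consecutive indices. Using $M_{[m+1,n]} = \left(\begin{smallmatrix} p_{[m+1,n-1]} & p_{[m+1,n]} \\ q_{[m+1,n-1]} & q_{[m+1,n]} \end{smallmatrix}\right)$ and the standard recurrences $p_{[m+1,n]} = a_n\, p_{[m+1,n-1]} + p_{[m+1,n-2]}$ (and likewise for $q$), together with $r_{[m+1,n]} = p_{[m+1,n-1]}$, $s_{[m+1,n]} = q_{[m+1,n-1]}$, I would massage $M_{[n_{k-1}+1,n_k]}$ into the form $N_k$ times a diagonal matrix $D_k = \mathrm{diag}(d_{k-1}, d_k)$, reading off the partial numerators and denominators. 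This is where the formulas $\alpha_k = (-1)^{n_k - n_{k-1} + 1} q_{[n_{k-2}+2, n_{k-1}]}/q_{[n_{k-1}+2, n_k]}$ and $\beta_k = q_{[n_{k-2}+2, n_k]}/q_{[n_{k-1}+2, n_k]}$ should fall out: the denominators $q_{[n_{k-1}+2, n_k]}$ are precisely the scaling factors, and the sign $(-1)^{n_k - n_{k-1}+1}$ comes from $\det M = -1$ applied $n_k - n_{k-1}$ times across the block (this accounts for the alternating signs that $\alpha_k$ must carry so that the {\sc gcf} telescopes correctly). One then checks that the telescoping of the diagonal factors $D_k$ across successive blocks is consistent — i.e.\ the ``right'' $d_k$ emerging from block $k$ matches the ``left'' $d_k$ needed for block $k+1$ — which is a bookkeeping matter given the shift identity $q_{[0,n_0]} = p_{n_0}$ and the conventions $q_{[m,m-1]} = 1$, $p_{[m,m-1]} = 0$.

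The main obstacle I anticipate is purely organizational rather than conceptual: keeping the index conventions straight across the ``seams,'' especially the degenerate initial block involving $n_{-1}, n_{-2}$, $q_{[0,-1]} \coloneqq 1$, and $q_{[0,n_0]} \coloneqq p_{n_0}$, and making sure the base case $k = 0$ (where $P_0/Q_0 = \beta_0 = p_{n_0}/q_{n_0}$ must be read off, so $\beta_0 = q_{[n_{-2}+2, n_0]}/q_{[n_{-1}+2,n_0]}$ really does reduce to $p_{n_0}/q_{n_0}$) is consistent with the inductive step. Once the matrix identity $\left(\begin{smallmatrix} P_{k-1} & P_k \\ Q_{k-1} & Q_k\end{smallmatrix}\right) = M_{[1,n_k]}\, \mathrm{diag}(c_{k-1}, c_k)$ is established for all $k$ with $c_k \neq 0$, convergence of the {\sc gcf} to $x$ is immediate from convergence of the {\sc rcf}-convergents $p_{n_k}/q_{n_k} \to x$ (the $(n_k)$ being strictly increasing), and $P_k/Q_k = p_{n_k}/q_{n_k}$ follows by cancelling $c_k$. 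I would present the argument as an induction on $k$, with the block identity for $M_{[n_{k-1}+1,n_k]}$ as the key lemma and the sign/scaling bookkeeping as the substance of the inductive step.
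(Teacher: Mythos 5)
The paper itself does not prove this theorem (it is quoted from Seidel, with a modern proof deferred to \cite{DKS2026}), so your plan can only be judged on its own terms. The overall strategy---encode the {\sc gcf} recursion in the matrices $N_k=\bigl(\begin{smallmatrix}0&\alpha_{k-1}\\ 1&\beta_k\end{smallmatrix}\bigr)$, show by induction that the columns of the {\sc gcf} convergent matrix are nonzero scalar multiples of the vectors $(p_{n_k},q_{n_k})^T$, with scalars $1/q_{[n_{k-1}+2,n_k]}$ and signs coming from determinants, then inherit convergence from $p_{n_k}/q_{n_k}\to x$---is sound and can be carried out. But the pivotal identity you propose to prove is false whenever the contraction actually skips convergents, i.e.\ whenever $n_k>n_{k-1}+1$ for some $k$ (the only interesting case). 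Indeed, the first column of $M_{[1,n_k]}$ is $(p_{n_k-1},q_{n_k-1})^T$, so your identity $\bigl(\begin{smallmatrix}P_{k-1}&P_k\\ Q_{k-1}&Q_k\end{smallmatrix}\bigr)=M_{[1,n_k]}\,\mathrm{diag}(c_{k-1},c_k)$ read at index $k$ forces $P_{k-1}/Q_{k-1}=p_{n_k-1}/q_{n_k-1}$, while the same identity at index $k-1$ forces $P_{k-1}/Q_{k-1}=p_{n_{k-1}}/q_{n_{k-1}}$; since distinct {\sc rcf}-convergents are distinct rationals, this can only hold if $n_{k-1}=n_k-1$ for all $k$. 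Likewise the per-block factorisation $M_{[n_{k-1}+1,n_k]}=N_kD_k$ cannot hold: its top-left entry is $p_{[n_{k-1}+1,n_k-1]}=q_{[n_{k-1}+2,n_k-1]}\ge 1$ as soon as $n_k\ge n_{k-1}+2$, whereas the top-left entry of $N_kD_k$ is $0$. So the induction as you set it up breaks at the first genuinely skipping step.

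The repair uses exactly the ingredients you list but changes the comparison matrix: aim for $(P_k,Q_k)^T=c_k\,(p_{n_k},q_{n_k})^T$ with $c_k=1/q_{[n_{k-1}+2,n_k]}$, i.e.\ replace $M_{[1,n_k]}$ by the matrix whose columns are indexed $n_{k-1}$ and $n_k$. The inductive step must eliminate the unwanted column $(p_{n_{k-1}-1},q_{n_{k-1}-1})^T$: from $M_{[1,n_k]}=M_{[1,n_{k-1}]}M_{[n_{k-1}+1,n_k]}$ together with the \emph{inverse} of the previous block $M_{[n_{k-2}+1,n_{k-1}]}$ (this inversion, of determinant $(-1)^{n_{k-1}-n_{k-2}}$, is where the sign really enters---not the current block) one obtains the two-term recursion $(p_{n_k},q_{n_k})^T=A_k\,(p_{n_{k-1}},q_{n_{k-1}})^T+B_k\,(p_{n_{k-2}},q_{n_{k-2}})^T$ with $A_k=q_{[n_{k-2}+2,n_k]}/q_{[n_{k-2}+2,n_{k-1}]}$ and $B_k=(-1)^{n_{k-1}-n_{k-2}+1}q_{[n_{k-1}+2,n_k]}/q_{[n_{k-2}+2,n_{k-1}]}$, using $p_{[m,n]}=q_{[m+1,n]}$ and the continuant identity $q_{[n_{k-2}+2,n_k]}=q_{[n_{k-2}+2,n_{k-1}]}q_{[n_{k-1}+1,n_k]}+q_{[n_{k-2}+2,n_{k-1}-1]}q_{[n_{k-1}+2,n_k]}$. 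With the above $c_k$ one checks $c_kA_k/c_{k-1}=\beta_k$ and $c_kB_k/c_{k-2}=\alpha_{k-1}$, exactly Definition \ref{defn: contraction}; the base cases use $q_{[0,-1]}=1$ and $q_{[0,n_0]}=p_{n_0}$ as you anticipated, and then $Q_k=c_kq_{n_k}\neq 0$ and $P_k/Q_k=p_{n_k}/q_{n_k}\to x$ finish the proof. (Minor point: for your product convention the seed matrix should be $\bigl(\begin{smallmatrix}1&\beta_0\\ 0&1\end{smallmatrix}\bigr)$, not $\bigl(\begin{smallmatrix}\beta_0&1\\ 1&0\end{smallmatrix}\bigr)$, but that is cosmetic.)
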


\begin{remark}
Seidel's theorem may be stated more generally to produce from a given {\sc gcf} (satisfying mild assumptions\footnote{These assumptions---which hold for {\sc rcf}s as considered here---are essentially that the convergents of the original {\sc gcf} are pairwise distinct; see Definition 6.1 and Lemma 6.5 of \cite{DKS2026}.}) a new {\sc gcf} whose convergents are any prescribed subsequence of the original {\sc gcf} convergents.  For this broader statement and a proof, see Definition 6.1\footnote{In \cite{DKS2026}, the partial numerators and partial denominators of a contraction are taken to be integers, so---beginning from a {\sc rcf}---the definition of contraction there and Definition \ref{defn: contraction} here differ.  However, these two definitions produce \emph{equivalent} {\sc gcf}s, i.e., {\sc gcf}s whose convergents are identical; see \cite{S1855} and Remark \ref{remark: equivalent_gcf} above.} and Theorem 6.6 of \cite{DKS2026}.  We also remark that Seidel's result is a consequence of a yet older theorem of Bernoulli (\cite{B1775}) which allows for the construction of a {\sc gcf} with any prescribed sequence of convergents $(P_k/Q_k)_{k\ge 0}$ satisfying $P_k/Q_k\neq P_{k+1}/Q_{k+1}$; see \cite{LW2008,P1950}.
\end{remark}

\subsection{Induced transformations and equidistribution}\label{Induced transformations and equidistribution}

Recall the natural extension $(\Omega,\mathcal{B},\bar\nu_G,\G)$ from \S\ref{Introduction}.  Let $\Delta\subset\Omega$ be $\bar\nu_G$-measurable with $\bar\nu_G(\Delta)>0$, and let $j=j_\Delta:\Omega\to\mathbb{N}\cup\{\infty\}$ denote the \emph{hitting time} to $\Delta$, i.e.,
\[j(z)=j_\Delta(z)\coloneqq \inf\{n>0 \mid \G^n(z)\in \Delta\}.\]
Since $(\Omega,\mathcal{B},\bar\nu_G,\G)$ is ergodic and conservative, $j(z)<\infty$ for $\bar\nu_G$--a.e.~$z\in \Omega$; in what follows, we remove from each measurable set $S\subset \Omega$ the null-set of points that enter $\Delta$ at most finitely often under iterates of $\G$ and denote this new set again by $S$.  Let $\G_\Delta:\Omega\to \Delta$ be the \emph{induced map} given by $\G_\Delta(z)\coloneqq \G^{j(z)}(z)$, and $\bar\nu_\Delta$ the \emph{induced measure} defined by $\bar\nu_\Delta(S)\coloneqq \bar\nu_G(S)/\bar\nu_G(\Delta)$ for any measurable $S\subset \Delta$.  Ergodicity of $(\Omega,\mathcal{B},\bar\nu_G,\G)$ implies that of the \emph{induced system} $(\Delta,\mathcal{B},\bar\nu_\Delta,\G_\Delta)$.  

\begin{remark}\label{remark: entropy}
We remark that the (metric) entropy of $(\Delta,\mathcal{B},\bar\nu_\Delta,\G_\Delta)$ is $h(\G_\Delta)=\pi^2/(6\log(2)\bar\nu_G(\Delta))$, which follows from Abramov's formula (\cite{A1959}) and the fact that the entropy of the Gauss map (and hence also its natural extension) is $h(G)=\pi^2/(6\log(2))$.  
\end{remark}

For $z\in\Omega$, define 
\[z_n^\Delta=(x_n^\Delta,y_n^\Delta)\coloneqq\G_\Delta^n(z) \qquad \text{and} \qquad j_n(z)=j_n^\Delta(z)=\sum_{k=0}^{n-1}j(z_k^\Delta), \quad n\ge 0,\]
where the summation is understood to be zero for $n=0$, i.e., $j_0(z)=0$.  When the point $z$ is understood, we use the suppressed notation $j_n=j_n(z)$, $n\ge 0$.  With this notation, $z_n^\Delta=\G^{j_n}(z)$, i.e., $(j_n)_{n\ge 1}$ is precisely the subsequence of indices $j\ge 1$ for which $\G^j(z)\in \Delta$.

Later, we shall be interested in $\G$-orbits of points of the form $z=(x,0)$, and we should like to make statements about such points for a.e.~$x\in (0,1)$.  While $(0,1)\times\{0\}$ is a $\bar\nu_G$--null set---so the pointwise ergodic theorem is not applicable---for a.e.~$x\in (0,1)$, the $\G$-orbit of $z=(x,0)$ nevertheless behaves like that of a `generic' point in $\Omega$:

\begin{thm}\label{equidist_thm}
For a.e.~$x\in (0,1)$, the $\G$-orbit of $z=(x,0)$ is $\bar\nu_G$-equidistributed.  In particular, 
\[\lim_{n\to\infty}\frac1n\sum_{k=0}^{n-1}\mathbf{1}_\Delta(z_k)=\bar\nu_G(\Delta)\]
for any \emph{$\bar\nu_G$-continuity set} $\Delta\subset\Omega$---i.e., any measurable set whose boundary is null---where $z_k=\G^k(z)$.
\end{thm}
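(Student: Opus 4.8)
The plan is to transfer equidistribution from a genuinely \emph{generic} orbit to the orbit of $z=(x,0)$, using the fact that the two orbits run over the same $G$-trajectory in the first coordinate and that their second coordinates become asymptotically equal. It is convenient to regard $\bar\nu_G$ and the empirical measures $\mu_n^{w}\coloneqq\frac1n\sum_{k=0}^{n-1}\delta_{\G^k(w)}$ as Borel probability measures on the compact metric space $[0,1]^2\supseteq\Omega$ (extending $\bar\nu_G$ by $0$ on the $\bar\nu_G$-null edge $\{1\}\times[0,1]$), and to read ``the orbit of $w$ is $\bar\nu_G$-equidistributed'' as $\mu_n^{w}\to\bar\nu_G$ weak-$*$; with this convention, the asserted limit over $\bar\nu_G$-continuity sets $\Delta$ is exactly the portmanteau theorem, so it suffices to prove the weak-$*$ convergence.

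First I would record the generic case. Fixing a countable dense set $\{g_i\}_{i\ge 1}\subset C([0,1]^2)$ and applying Birkhoff's pointwise ergodic theorem to each $g_i$ in the ergodic system $(\Omega,\mathcal{B},\bar\nu_G,\G)$ yields a set $\mathcal{E}\subseteq\Omega$ with $\bar\nu_G(\mathcal{E})=1$ on which $\frac1n\sum_{k=0}^{n-1}g_i(\G^k w)\to\int g_i\,d\bar\nu_G$ for all $i$; since the $\mu_n^{w}$ have mass $1$ and $\{g_i\}$ is dense, this upgrades to $\mu_n^{w}\to\bar\nu_G$ for every $w\in\mathcal{E}$. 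Next I would descend to a vertical fiber: the first marginal of $\bar\nu_G$ is $\nu_G$, which is equivalent to Lebesgue on $(0,1)$, and the conditional measures of $\bar\nu_G$ on the fibers $\{x\}\times[0,1]$ are equivalent to Lebesgue on $[0,1]$ (their densities $y\mapsto 1/(\log 2\,(1+xy)^2)$ being bounded above and bounded away from $0$). Hence, from $\bar\nu_G(\Omega\setminus\mathcal{E})=0$ and Fubini, for Lebesgue-a.e.\ $x\in(0,1)$ the fiber $\mathcal{E}_x\coloneqq\{y:(x,y)\in\mathcal{E}\}$ is co-null in $[0,1]$, in particular nonempty.

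Now fix such an $x$, choose $y\in\mathcal{E}_x$, and set $z=(x,0)$, $z'=(x,y)$. The shift description of $\G$ recalled in Section~\ref{Introduction} shows $\G^n z$ and $\G^n z'$ have the same first coordinate $G^n(x)$, while a direct computation with $M_{[1,n]}=\left(\begin{smallmatrix}p_{n-1}&p_n\\ q_{n-1}&q_n\end{smallmatrix}\right)$ shows the second coordinate of $\G^n(x,t)$ equals $(p_{n-1}t+q_{n-1})/(p_nt+q_n)$. Using $|p_{n-1}q_n-p_nq_{n-1}|=1$ and $0\le y\le 1\le q_n(x)$,
\[
d\big(\G^n z,\G^n z'\big)=\left|\frac{q_{n-1}}{q_n}-\frac{p_{n-1}y+q_{n-1}}{p_ny+q_n}\right|=\frac{y}{(p_ny+q_n)\,q_n}\le\frac{1}{q_n(x)^2}\xrightarrow[n\to\infty]{}0,
\]
since $q_n(x)\to\infty$. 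As every $g\in C([0,1]^2)$ is uniformly continuous, $g(\G^n z)-g(\G^n z')\to 0$, so $\mu_n^{z}$ and $\mu_n^{z'}$ have the same weak-$*$ limit; and $z'\in\mathcal{E}$ makes that limit $\bar\nu_G$. Thus the orbit of $(x,0)$ is $\bar\nu_G$-equidistributed for a.e.\ $x$, and $\frac1n\sum_{k=0}^{n-1}\mathbf 1_\Delta(z_k)\to\bar\nu_G(\Delta)$ for $\bar\nu_G$-continuity sets $\Delta$ follows by portmanteau.

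The main obstacle is conceptual rather than computational: $(0,1)\times\{0\}$ is $\bar\nu_G$-null, so Birkhoff cannot be applied to it directly, and the whole argument hinges on the ``shadowing'' estimate above, which says precisely that $\G^n(x,0)$ asymptotically tracks a generic orbit in the same fiber. The only genuinely delicate bookkeeping is the passage to the compactification $[0,1]^2$ so that weak-$*$ convergence of empirical measures and the portmanteau theorem are legitimately available (one should note that replacing the boundary in ``$\Delta$ is a continuity set'' by the boundary in $[0,1]^2$ changes nothing, since the two differ within the $\bar\nu_G$-null edge). Once that is in place, every step is soft.
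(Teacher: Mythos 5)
Your proof is correct. Note that the paper does not prove Theorem \ref{equidist_thm} at all---it only points to \cite{J86}, Theorem 1.2 of Chapter 3 of \cite{KN1974}, and Remark 4.6.i of \cite{K1991}---so what you have written is a complete, self-contained substitute for those citations, and it follows the standard route behind them: Birkhoff on a countable dense family of continuous functions gives a full-measure set of $\bar\nu_G$-generic points; equivalence of $\bar\nu_G$ with planar Lebesgue measure plus Fubini puts a generic point $(x,y)$ in the fiber over a.e.\ $x$; and the exact computation that the second coordinate of $\G^n(x,t)$ is $(p_{n-1}t+q_{n-1})/(p_nt+q_n)$, with first coordinates of $\G^n(x,0)$ and $\G^n(x,y)$ identical, yields the shadowing bound $y/(q_n(p_ny+q_n))\le 1/q_n^2\to 0$, after which uniform continuity transfers equidistribution from $(x,y)$ to $(x,0)$ and the portmanteau theorem gives the statement for $\bar\nu_G$-continuity sets. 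The two points that could have caused trouble---that $(0,1)\times\{0\}$ is $\bar\nu_G$-null so Birkhoff cannot be applied directly, and that the boundary of $\Delta$ should be taken in the compactification $[0,1]^2$ where weak-$*$ convergence is invoked---are both handled explicitly, the latter correctly dismissed because the discrepancy lies in the null edge $\{1\}\times[0,1]$. I see no gap.
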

For a proof of this, see \cite{J86} and Theorem 1.2 of Chapter 3 of \cite{KN1974} (see also Remark 4.6.i of \cite{K1991}).

\subsection{Inducing contractions of {\sc rcf}s}\label{Inducing contractions of rcfs}

Let $\Delta\subset\Omega$ be $\bar\nu_G$-measurable with $\bar\nu_G(\Delta)>0$.  When defined,\footnote{I.e., when the $\G$-orbit of $z=(x,0)$ enters $\Delta$ infinitely often.  By Theorem \ref{equidist_thm}, this occurs for a.e.~$x$ if $\Delta$ is a $\bar\nu_G$-continuity set.  More generally---using ergodicity and the fact that $|\G^n(x,0)-\G^n(x,y)|\to 0$ uniformly in $y$---one can show that this occurs a.s.~if the interior of $\Delta$ has positive measure.} denote by 
\[[\beta_0^\Delta(x);\alpha_0^\Delta(x)/\beta_1^\Delta(x),\alpha_1^\Delta(x)/\beta_2^\Delta(x),\dots]\]
the contraction of $x$ with respect to $(n_k)_{k\ge 0}$, where $n_k\coloneqq j_{k+1}-1=j_{k+1}^\Delta(z)-1$ and $z=(x,0)$.  When the point $x$ is understood, it is suppressed from the notation and we write $x=[\beta_0^\Delta;\alpha_0^\Delta/\beta_1^\Delta,\alpha_1^\Delta/\beta_2^\Delta,\dots]$.  By Seidel's theorem (Theorem \ref{thm: seidel}), the $k^\text{th}$ convergent $P_k^\Delta/Q_k^\Delta$ of $[\beta_0^\Delta;\alpha_0^\Delta/\beta_1^\Delta,\alpha_1^\Delta/\beta_2^\Delta,\dots]$ is $P_k^\Delta/Q_k^\Delta=p_{j_{k+1}-1}/q_{j_{k+1}-1}$, $k\ge 0$, where $p_n/q_n$ is the $n^\text{th}$ {\sc rcf}-convergent of $x$.

\begin{remark}\label{remark: indexing}
We choose to index with $n_k=j_{k+1}-1$ rather than $n_k=j_k$ for two reasons: first, it simplifies the examples in \S\ref{Examples} below, and second, it makes the connection to the broader theory of contracted Farey expansions from \cite{DKS2026} immediate (after changing coordinates via $(x,y)\mapsto (x,1/(1+y))$; see \S7.1 of \cite{DKS2026}).  While our indexing differs from \cite{K1991}, for any singularisation area $S$ and irrational $x$, one obtains the $S$-expansion of $x$ as $x=[\beta_0^\Delta;\alpha_0^\Delta/\beta_1^\Delta,\alpha_1^\Delta/\beta_2^\Delta,\dots]$ where $\Delta=\Omega\setminus \G(S)$; see \S7.2 of \cite{DKS2026}.
\end{remark}

Note that by Definition \ref{defn: contraction}, the partial numerators $\alpha_k^\Delta$ and partial denominators $\beta_k^\Delta$ are defined in terms of the numbers $q_{[m,n]}$ which depend on the {\sc rcf}-expansion of $x$.  We wish to determine $\alpha_k^\Delta$ and $\beta_k^\Delta$ solely in terms of the induced system $(\Delta,\mathcal{B},\bar\nu_\Delta,\G_\Delta)$.  From \eqref{NE_G}, \eqref{a_and_M} and \eqref{M_{[m,n]}}, one finds that for any $n> 0$ and $z=(x,y)\in\Omega$,
\begin{equation}\label{G^n_in_M}
\G^n(z)=\left(M_{[1,n]}^{-1}\cdot x,M_{[1,n]}^T\cdot y\right),
\end{equation}
where $M_{[1,n]}=M_{[1,n]}(x)$.  Setting $M_\Delta(z)\coloneqq M_{[1,j(z)]}(x)$, $M_\Delta^{-1}(z)\coloneqq (M_\Delta(z))^{-1}$ and $M_\Delta^T(z)\coloneqq (M_\Delta(z))^T$, we thus have
\begin{equation}\label{G_Delta}
\G_\Delta(z)=\left(M_\Delta^{-1}(z)\cdot x, M_\Delta^T(z)\cdot y\right).
\end{equation}
Denote the entries of $M_\Delta(z)$ by 
\begin{equation*}
\begin{pmatrix}
r_\Delta(z) & p_\Delta(z)\\
s_\Delta(z) & q_\Delta(z)
\end{pmatrix}
\coloneqq 
M_\Delta(z),
\end{equation*}
and define\footnote{Note that $s_\Delta(z)=q_{j(z)-1}(x)\neq 0$ for all $z$ since $j(z)\ge 1$.} $\alpha_\Delta:\Omega\to\mathbb{Q}\setminus \{0\}$ and $\beta_\Delta:\Omega\to\mathbb{Q}$ by 
\begin{equation}\label{alpha_Delta,beta_Delta}
\alpha_\Delta(z)\coloneqq \frac{(-1)^{j(\G_\Delta(z))+1}s_\Delta(z)}{s_\Delta(\G_\Delta(z))} \qquad \text{and} \qquad \beta_\Delta(z)\coloneqq q_\Delta(z)+\frac{s_\Delta(z)r_\Delta(\G_\Delta(z))}{s_\Delta(\G_\Delta(z))}.
\end{equation}

\begin{prop}\label{prop: induced_determines_cf_alg}
The partial numerators and partial denominators of $x=[\beta_0^\Delta;\alpha_0^\Delta/\beta_1^\Delta,\alpha_1^\Delta/\beta_2^\Delta,\dots]$ may be written as
\[\alpha_0^\Delta=\frac{(-1)^{j(z_0^\Delta)+1}}{s_\Delta(z_0^\Delta)}, \qquad \beta_0^\Delta=\frac{r_\Delta(z_0^\Delta)}{s_\Delta(z_0^\Delta)}, \qquad \alpha_{k+1}^\Delta=\alpha_\Delta(z_k^\Delta) \qquad \text{and} \qquad \beta_{k+1}^\Delta=\beta_\Delta(z_k^\Delta), \quad k\ge 0,\]
where $z_k^\Delta=\G_\Delta^k(z)$ with $z=(x,0)$.
\end{prop}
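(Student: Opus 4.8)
The plan is to translate everything into the matrix language of \S\ref{Matrix notation and contraction}: first express the matrices $M_\Delta(z_k^\Delta)$ explicitly in terms of the {\sc rcf}-data of $x$, and then substitute into Definition~\ref{defn: contraction} with the indexing $n_k=j_{k+1}-1$.

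The first step is to pin down $M_\Delta(z_k^\Delta)$ for $z=(x,0)$. Since $z_k^\Delta=\G^{j_k}(z)$, equation~\eqref{G^n_in_M} shows that the first coordinate of $z_k^\Delta$ equals $x_{j_k}=G^{j_k}(x)$, so $G^i(x_{j_k})=x_{j_k+i}$ for all $i\ge 0$. Because $j(z_k^\Delta)=j_{k+1}-j_k$, the definition $M_\Delta(w)=M_{[1,j(w)]}(\,\cdot\,)$ combined with~\eqref{M_{[m,n]}} gives
\[ M_\Delta(z_k^\Delta)=M(x_{j_k})M(x_{j_k+1})\cdots M(x_{j_{k+1}-1})=M_{[j_k+1,\,j_{k+1}]}(x), \qquad k\ge 0, \]
where one reads $j_0=0$. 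Matching this with the block form of $M_{[m,n]}$ recorded in \S\ref{Matrix notation and contraction} identifies, for every $k\ge 0$,
\[ r_\Delta(z_k^\Delta)=p_{[j_k+1,\,j_{k+1}-1]}, \qquad s_\Delta(z_k^\Delta)=q_{[j_k+1,\,j_{k+1}-1]}, \qquad q_\Delta(z_k^\Delta)=q_{[j_k+1,\,j_{k+1}]}, \]
and in particular $r_\Delta(z_0^\Delta)=p_{j_1-1}$, $s_\Delta(z_0^\Delta)=q_{j_1-1}$, where $j_1=j(z)=j(z_0^\Delta)$.

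Next I would substitute $n_k=j_{k+1}-1$ (with the conventions $n_{-1}=-1$, $n_{-2}=-2$ and $j_0=0$) into Definition~\ref{defn: contraction}. For $k\ge 1$ one finds $n_k-n_{k-1}+1=j_{k+1}-j_k+1=j(z_k^\Delta)+1$ and the index ranges collapse to $[n_{k-2}+2,\,n_{k-1}]=[j_{k-1}+1,\,j_k-1]$ and $[n_{k-1}+2,\,n_k]=[j_k+1,\,j_{k+1}-1]$. Feeding in the entry identifications above together with $\G_\Delta(z_{k-1}^\Delta)=z_k^\Delta$ then turns the formula for $\alpha_k^\Delta$ directly into $\alpha_\Delta(z_{k-1}^\Delta)$ as in~\eqref{alpha_Delta,beta_Delta}, i.e.\ $\alpha_{k+1}^\Delta=\alpha_\Delta(z_k^\Delta)$. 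The $k=0$ formulas drop out of the degenerate conventions: $q_{[0,-1]}=1$ yields $\alpha_0^\Delta=(-1)^{j(z_0^\Delta)+1}/q_{j_1-1}=(-1)^{j(z_0^\Delta)+1}/s_\Delta(z_0^\Delta)$, and $q_{[0,n_0]}:=p_{n_0}=p_{j_1-1}$ together with $q_{[1,n_0]}=q_{j_1-1}$ yields $\beta_0^\Delta=p_{j_1-1}/q_{j_1-1}=r_\Delta(z_0^\Delta)/s_\Delta(z_0^\Delta)$.

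Finally, for $\beta_k^\Delta$ with $k\ge 1$ the one substantive input is the factorisation $M_{[m,n]}(x)=M_{[m,\ell]}(x)\,M_{[\ell+1,n]}(x)$ for $m\le\ell<n$, whose $(2,1)$-entry reads
\[ q_{[m,\,n-1]}=q_{[m,\,\ell-1]}\,p_{[\ell+1,\,n-1]}+q_{[m,\,\ell]}\,q_{[\ell+1,\,n-1]}. \]
Applying this with $m=j_{k-1}+1$, $\ell=j_k$, $n=j_{k+1}$ (and $j_0=0$) gives $q_{[j_{k-1}+1,\,j_{k+1}-1]}=s_\Delta(z_{k-1}^\Delta)\,r_\Delta(z_k^\Delta)+q_\Delta(z_{k-1}^\Delta)\,s_\Delta(z_k^\Delta)$; since the index computation of the previous paragraph shows $\beta_k^\Delta=q_{[j_{k-1}+1,\,j_{k+1}-1]}\big/q_{[j_k+1,\,j_{k+1}-1]}=q_{[j_{k-1}+1,\,j_{k+1}-1]}\big/s_\Delta(z_k^\Delta)$, dividing through by $s_\Delta(z_k^\Delta)$ gives precisely $\beta_k^\Delta=\beta_\Delta(z_{k-1}^\Delta)$ from~\eqref{alpha_Delta,beta_Delta}, i.e.\ $\beta_{k+1}^\Delta=\beta_\Delta(z_k^\Delta)$. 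I expect the main obstacle to be purely organisational --- keeping the degenerate conventions at small indices consistent with the generic formulas --- since the lone genuine computation, the $\beta$-identity above, is just the product of two $2\times2$ matrices.
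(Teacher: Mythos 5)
Your proposal is correct and follows essentially the same route as the paper's proof: you identify $M_\Delta(z_k^\Delta)=M_{[j_k+1,j_{k+1}]}(x)$ to match the entries $r_\Delta,s_\Delta,q_\Delta$ with the $p_{[m,n]},q_{[m,n]}$, rewrite Definition \ref{defn: contraction} with $n_k=j_{k+1}-1$ (handling $k=0$ via the conventions $q_{[0,-1]}=1$, $q_{[0,n_0]}=p_{n_0}$), and obtain the $\beta$-identity from the bottom-left entry of the factorisation $M_{[j_{k-1}+1,j_{k+1}]}=M_{[j_{k-1}+1,j_k]}M_{[j_k+1,j_{k+1}]}$. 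The index bookkeeping and sign computation all check out, so no gaps.
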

\begin{proof}
By the definition of $[\beta_0^\Delta;\alpha_0^\Delta/\beta_1^\Delta,\alpha_1^\Delta/\beta_2^\Delta,\dots]$ and Definition \ref{defn: contraction}, we have
\begin{equation}\label{alpha^Delta,beta^Delta}
\alpha_k^\Delta=\frac{(-1)^{j_{k+1}-j_k+1}q_{[j_{k-1}+1,j_k-1]}}{q_{[j_k+1,j_{k+1}-1]}} \qquad \text{and} \qquad \beta_k^\Delta=\frac{q_{[j_{k-1}+1,j_{k+1}-1]}}{q_{[j_k+1,j_{k+1}-1]}},
\quad k\ge 0,
\end{equation}
where $j_k\coloneqq k$ for $k<0$, $q_{[0,-1]}\coloneqq 1$, and $q_{[0,j_1-1]}\coloneqq p_{[1,j_1-1]}$.  Notice that 
\begin{multline*}
M_{[j_k+1,j_{k+1}]}=M_{j_k+1}(x)M_{j_k+2}(x)\cdots M_{j_k+j(z_k^\Delta)}(x)=M_1(x_{j_k})M_2(x_{j_k})\cdots M_{j(z_k^\Delta)}(x_{j_k})\\
=M_1(x_k^\Delta)M_2(x_k^\Delta)\cdots M_{j(z_k^\Delta)}(x_k^\Delta)=M_{[1,j(z_k^\Delta)]}(x_k^\Delta)=M_\Delta(z_k^\Delta),
\end{multline*}
so
\begin{equation}\label{entries_M_Delta_vs_M_js}
\begin{pmatrix}
p_{[j_k+1,j_{k+1}-1]} & p_{[j_k+1,j_{k+1}]}\\
q_{[j_k+1,j_{k+1}-1]} & q_{[j_k+1,j_{k+1}]}
\end{pmatrix}
=
\begin{pmatrix}
r_\Delta(z_k^\Delta) & p_\Delta(z_k^\Delta)\\
s_\Delta(z_k^\Delta) & q_\Delta(z_k^\Delta)
\end{pmatrix},
\quad k\ge 0.
\end{equation}

The claim for $\alpha_k^\Delta$ follows from \eqref{alpha_Delta,beta_Delta}, \eqref{alpha^Delta,beta^Delta}, \eqref{entries_M_Delta_vs_M_js}, and the facts that $j_{k+1}-j_{k}=j(z_k^\Delta)$ and $q_{[0,-1]}=1$.  Next, notice that for $k>0$, $q_{[j_{k-1}+1,j_{k+1}-1]}$ is the bottom-left entry of $M_{[j_{k-1}+1,j_{k+1}]}=M_{[j_{k-1}+1,j_k]}M_{[j_k+1,j_{k+1}]}$, and hence
\begin{multline*}
\frac{q_{[j_{k-1}+1,j_{k+1}-1]}}{q_{[j_k+1,j_{k+1}-1]}}=\frac{q_{[j_{k-1}+1,j_k]}q_{[j_k+1,j_{k+1}-1]}+q_{[j_{k-1}+1,j_k-1]}p_{[j_k+1,j_{k+1}-1]}}{q_{[j_k+1,j_{k+1}-1]}}\\
=q_{[j_{k-1}+1,j_k]}+\frac{q_{[j_{k-1}+1,j_k-1]}p_{[j_k+1,j_{k+1}-1]}}{q_{[j_k+1,j_{k+1}-1]}}.
\end{multline*}
The claim for $\beta_k^\Delta$ follows from this, \eqref{alpha_Delta,beta_Delta}, \eqref{alpha^Delta,beta^Delta}, \eqref{entries_M_Delta_vs_M_js}, $q_{[0,j_1-1]}=p_{[1,j_1-1]}$ and $q_{[0,-1]}=1$.
\end{proof}

\begin{remark}\label{no_rcf_needed}
The upshot of Proposition \ref{prop: induced_determines_cf_alg} is that no prior knowledge of the {\sc rcf}-expansion of $x$ is needed in order to compute the partial numerators and partial denominators of $x=[\beta_0^\Delta;\alpha_0^\Delta/\beta_1^\Delta,\alpha_1^\Delta/\beta_2^\Delta,\dots]$---these digits are obtained directly from the dynamical system $(\Delta,\mathcal{B},\bar\nu_\Delta,\G_\Delta)$.  Indeed, one computes $x=[\beta_0^\Delta;\alpha_0^\Delta/\beta_1^\Delta,\alpha_1^\Delta/\beta_2^\Delta,\dots]$ by determining subregions of $\Delta$ on which the matrices $M_\Delta(z)$ are constant and considering the subsequent subregions\footnote{As seen in \S\ref{Examples}, these regions also \emph{implicitly} determine digits of the {\sc rcf}-expansion of $x$, but for our algorithms no knowledge of the {\sc rcf}-expansion is needed \emph{beforehand}.} entered under the forward $\G_\Delta$-orbit of $z=(x,0)$.  This process is illustrated further in \S\ref{Examples}. 
\end{remark}

\begin{remark}\label{remark: rational_digits}
By Proposition \ref{prop: induced_determines_cf_alg}, the partial numerators and partial denominators $\alpha_k^\Delta$ and $\beta_k^\Delta$ depend on at most two points in the $\G_\Delta$-orbit of $z$.  If one insists---as described in Remark \ref{remark: equivalent_gcf}---that the partial numerators and partial denominators be integers, one finds that they depend on three points in the $\G_\Delta$-orbit of $z$.
\end{remark}

\subsection{Superoptimal continued fractions}\label{Superoptimal continued fractions}

In this subsection, we obtain---for any choice of $\varepsilon$ and $C$---a whole family of induced systems $(\Delta,\mathcal{B},\bar\nu_\Delta,G_\Delta)$ such that $x=[\beta_0^\Delta;\alpha_0^\Delta/\beta_1^\Delta,\alpha_1^\Delta/\beta_2^\Delta,\dots]$ is $(\varepsilon,C)$-superoptimal for a.e.~$x\in(0,1)$.  To begin, let $g:\Omega\to [0,1]$ be defined for $z=(x,y)$ by
\begin{equation}\label{g}
g(z)\coloneqq\frac{y}{1+xy}.
\end{equation}
The following result is well-known, but we include a proof for completeness: 
\begin{prop}\label{Theta_and_g}
For any $n>0$,
\[\Theta(x,p_{n-1}/q_{n-1})=g(z_n),\]
where $z_n=\G^n(z)$ with $z=(x,0)$.
\end{prop}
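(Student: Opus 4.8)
The plan is to compute $g(z_n)$ explicitly in terms of the {\sc rcf}-data $p_{n-1},q_{n-1},p_n,q_n$ of $x$ using the matrix description of $\G^n$ recorded in \eqref{G^n_in_M}, and then match the result with $\Theta(x,p_{n-1}/q_{n-1})=q_{n-1}^2|x-p_{n-1}/q_{n-1}|$. Write $z_n=(x_n,y_n)$. Since $z=(x,0)$, \eqref{G^n_in_M} gives $z_n=\bigl(M_{[1,n]}^{-1}\cdot x,\ M_{[1,n]}^T\cdot 0\bigr)$, and from the notation of \S\ref{Matrix notation and contraction} we have
\[
M_{[1,n]}=\begin{pmatrix}p_{n-1} & p_n\\ q_{n-1} & q_n\end{pmatrix},\qquad \det M_{[1,n]}=(-1)^n,
\]
the determinant being $(-1)^n$ because each factor $M_i$ has determinant $-1$. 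Applying the two M\"obius actions yields
\[
y_n=M_{[1,n]}^T\cdot 0=\frac{q_{n-1}}{q_n},\qquad x_n=M_{[1,n]}^{-1}\cdot x=\frac{q_nx-p_n}{p_{n-1}-q_{n-1}x}.
\]

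Next I would substitute these into $g(z_n)=y_n/(1+x_ny_n)$ using \eqref{g}. Clearing the common factor $q_n(p_{n-1}-q_{n-1}x)$, the numerator of $1+x_ny_n$ telescopes to $p_{n-1}q_n-p_nq_{n-1}=\det M_{[1,n]}=(-1)^n$, so that $1+x_ny_n=(-1)^n/\bigl(q_n(p_{n-1}-q_{n-1}x)\bigr)$ and hence
\[
g(z_n)=(-1)^n\,q_{n-1}\,(p_{n-1}-q_{n-1}x)=(-1)^n\,q_{n-1}^2\Bigl(\tfrac{p_{n-1}}{q_{n-1}}-x\Bigr).
\]

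It then remains only to fix the sign. Rather than invoke the alternation of consecutive {\sc rcf}-convergents about $x$, I would argue self-containedly: since $\G(\Omega)\subseteq\Omega$ we have $z_n=\G^n(z)\in\Omega$, so $x_n,y_n\ge 0$ and therefore $g(z_n)=y_n/(1+x_ny_n)\ge 0$; moreover $p_{n-1}-q_{n-1}x\neq 0$ because $x$ is irrational. Combining the displayed algebraic identity with $g(z_n)\ge 0$ forces $(-1)^n\bigl(p_{n-1}/q_{n-1}-x\bigr)=\lvert x-p_{n-1}/q_{n-1}\rvert$, whence $g(z_n)=q_{n-1}^2\lvert x-p_{n-1}/q_{n-1}\rvert=\Theta(x,p_{n-1}/q_{n-1})$, as claimed.

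There is no genuine obstacle here; the statement is a short, sign-sensitive computation (and indeed well known). The only point demanding a little care is the bookkeeping of the M\"obius normalisations — the determinant scalar attached to $M_{[1,n]}^{-1}$ is irrelevant for the action itself but must be tracked, since it reappears as the factor $(-1)^n$ when one simplifies $1+x_ny_n$ — and the cleanest way to conclude is the positivity observation $g(z_n)\ge 0$ above rather than a separate appeal to the geometry of the convergents.
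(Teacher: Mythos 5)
Your proof is correct and is essentially the same argument as the paper's: both rest on the identity \eqref{G^n_in_M}, the entries of $M_{[1,n]}$, and $\det M_{[1,n]}=(-1)^n$. The only difference is direction — the paper substitutes $x=M_{[1,n]}\cdot x_n$ into $\Theta(x,p_{n-1}/q_{n-1})$ so the absolute value absorbs the sign automatically, whereas you expand $g(z_n)$ via $x_n=M_{[1,n]}^{-1}\cdot x$ and then fix the sign with the (valid) observation that $g(z_n)\ge 0$ and $p_{n-1}-q_{n-1}x\neq 0$ for irrational $x$.
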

\begin{proof}
Let $z_n=(x_n,y_n)$, and notice from \eqref{G^n_in_M} that $x=M_{[1,n]}\cdot x_n=(p_{n-1}x_n+p_n)/(q_{n-1}x_n+q_n)$ and $y_n=M_{[1,n]}^T\cdot 0=q_{n-1}/q_n$.  Using \eqref{approx_coeffs}, the fact that $\det M_{[1,n]}=(-1)^n$, and \eqref{g}, we compute
\[\Theta(x,p_{n-1}/q_{n-1})=q_{n-1}^2\left|x-\frac{p_{n-1}}{q_{n-1}}\right|=q_{n-1}^2\left|\frac{p_{n-1}x_n+p_n}{q_{n-1}x_n+q_n}-\frac{p_{n-1}}{q_{n-1}}\right|=\frac{q_{n-1}}{q_{n-1}x_n+q_n}=g(z_n).\qedhere\]
\end{proof}

\begin{thm}\label{eps_C_so_thm}
Let $\varepsilon,C>0$, and let $\Delta$ be a $\bar\nu_G$-continuity set satisfying
\[\text{(a)} \quad \Delta\subset g^{-1}([0,\varepsilon]) \qquad \text{and} \qquad \text{(b)} \quad 0<\bar\nu_G(\Delta)\le 1/C.\]
Then $x=[\beta_0^\Delta;\alpha_0^\Delta/\beta_1^\Delta,\alpha_1^\Delta/\beta_2^\Delta,\dots]$ is $(\varepsilon,C)$-superoptimal for a.e.~$x\in(0,1)$.
\end{thm}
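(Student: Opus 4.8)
The plan is to verify conditions (i) and (ii) of Definition~\ref{defn: socf} separately, invoking Theorem~\ref{equidist_thm} to transfer measure-theoretic facts about $\bar\nu_G$ to the a.e.\ statement about $\G$-orbits of points $z=(x,0)$. Throughout, fix $z=(x,0)$ for a typical $x$ and write $z_k=\G^k(z)$, $j_n=j_n^\Delta(z)$, so that the convergents of $x=[\beta_0^\Delta;\alpha_0^\Delta/\beta_1^\Delta,\dots]$ are $P_k^\Delta/Q_k^\Delta=p_{j_{k+1}-1}/q_{j_{k+1}-1}$ by Seidel's theorem, with $Q_k^\Delta=q_{j_{k+1}-1}$.

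For condition (i): by Proposition~\ref{Theta_and_g}, $\Theta(x,p_{n-1}/q_{n-1})=g(z_n)$ for every $n>0$. Applying this with $n=j_{k+1}$ gives $\Theta(x,P_k^\Delta/Q_k^\Delta)=g(z_{j_{k+1}})=g(\G_\Delta^{k+1}(z))$. Since $\G_\Delta$ maps into $\Delta$, we have $\G_\Delta^{k+1}(z)\in\Delta\subset g^{-1}([0,\varepsilon])$ by hypothesis~(a), so $\Theta(x,P_k^\Delta/Q_k^\Delta)\le\varepsilon$ for all $k\ge 0$. (The case $k=0$ needs $\Theta(x,P_0^\Delta/Q_0^\Delta)\le\varepsilon$ as well; here $P_0^\Delta/Q_0^\Delta=p_{j_1-1}/q_{j_1-1}$ and the same computation with $n=j_1$ applies since $z_{j_1}=\G_\Delta(z)\in\Delta$.) This step holds for every $x$ whose $\G$-orbit enters $\Delta$ infinitely often, hence for a.e.\ $x$ by the footnote following Theorem~\ref{equidist_thm}; no ergodic averaging is needed.

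For condition (ii): we must show $\liminf_{k\to\infty}n(k)/k\ge C$, where $q_{n(k)}\le Q_k^\Delta<q_{n(k)+1}$. Since $Q_k^\Delta=q_{j_{k+1}-1}$ and the $q_n$ are strictly increasing, we have exactly $n(k)=j_{k+1}-1$. Thus $n(k)/k=(j_{k+1}-1)/k$, and it suffices to show $\liminf_{k\to\infty}j_{k+1}/k\ge C$, equivalently $\limsup_{k\to\infty}k/j_{k+1}\le 1/C$. Now $j_{k+1}$ is the index of the $(k+1)$-st visit of the $\G$-orbit of $z$ to $\Delta$, so $k+1=\sum_{m=0}^{j_{k+1}}\mathbf 1_\Delta(z_m)$ (up to an additive constant from boundary/indexing conventions, which is harmless in the limit). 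By Theorem~\ref{equidist_thm}, for a.e.\ $x$ the $\G$-orbit of $z=(x,0)$ is $\bar\nu_G$-equidistributed, and since $\Delta$ is a $\bar\nu_G$-continuity set, $\frac1N\sum_{m=0}^{N-1}\mathbf 1_\Delta(z_m)\to\bar\nu_G(\Delta)$. Taking $N=j_{k+1}$ yields $k/j_{k+1}\to\bar\nu_G(\Delta)$, so $j_{k+1}/k\to 1/\bar\nu_G(\Delta)\ge C$ by hypothesis~(b). Hence $\liminf_{k\to\infty}n(k)/k=1/\bar\nu_G(\Delta)\ge C$.

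I expect the main obstacle to be bookkeeping rather than conceptual: carefully matching the indexing $n_k=j_{k+1}-1$ with the definition of $n(k)$, and checking that the $k=0$ boundary case and the off-by-one in $k+1=\sum\mathbf 1_\Delta(z_m)$ do not affect the limit. One should also confirm that equidistribution along the full orbit implies the stated convergence along the subsequence $N=j_{k+1}$; this is immediate because $j_{k+1}\to\infty$ and the Cesàro averages converge, so no subsequential subtlety arises. A secondary point worth a sentence is that equidistribution (Theorem~\ref{equidist_thm}) already guarantees the orbit of $z=(x,0)$ enters the continuity set $\Delta$ infinitely often for a.e.\ $x$, so the contraction $x=[\beta_0^\Delta;\alpha_0^\Delta/\beta_1^\Delta,\dots]$ is well-defined a.e., and $\bar\nu_G(\Delta)>0$ ensures $j_{k+1}<\infty$ for all $k$.
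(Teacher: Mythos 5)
Your proposal is correct and follows essentially the same route as the paper: Proposition~\ref{Theta_and_g} together with hypothesis (a) gives $\Theta(x,P_k^\Delta/Q_k^\Delta)=g(z_{k+1}^\Delta)\le\varepsilon$, and Theorem~\ref{equidist_thm} applied to the visit-count identity (the paper writes $k=\sum_{\ell=1}^{j_{k+1}-1}\mathbf{1}_\Delta(z_\ell)$, matching your count up to the harmless additive constant you note) yields $\lim_k k/(j_{k+1}-1)=\bar\nu_G(\Delta)\le 1/C$ with $n(k)=j_{k+1}-1$. The bookkeeping points you flag (the $k=0$ case, the subsequence $N=j_{k+1}$, a.e.\ well-definedness) are handled exactly as you anticipate, so there is nothing to add.
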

\begin{proof}
Let $x$ belong to the set of full measure satisfying Theorem \ref{equidist_thm}.  By construction, the $k^\text{th}$ convergent of $x=[\beta_0^\Delta/\alpha_0^\Delta;\alpha_1^\Delta/\beta_1^\Delta,\alpha_2^\Delta/\beta_2^\Delta,\dots]$ is $P_k^\Delta/Q_k^\Delta=p_{j_{k+1}-1}/q_{j_{k+1}-1}$.  By Proposition \ref{Theta_and_g} and assumption (a), $\Theta(x,P_k^\Delta/Q_k^\Delta)=g(z_{j_{k+1}})=g(z_{k+1}^\Delta)\le \varepsilon,$ so condition (i) of Definition \ref{defn: socf} is satisfied.  Moreover, 
\[k=\sum_{\ell=1}^{j_{k+1}-1}\mathbf{1}_\Delta(z_\ell)\]
for all $k\ge 1$.  By Theorem \ref{equidist_thm} and assumption (b), dividing both sides by $j_{k+1}-1$ and letting $k\to\infty$ gives $\lim_{k\to\infty}k/(j_{k+1}-1)=\bar\nu_G(\Delta)\le 1/C$.  But $n(k)=j_{k+1}-1$ with $n(k)$ as in (ii) of Definition \ref{defn: socf}, so this condition also holds. 
\end{proof}

Recall from Remark \ref{remark: entropy} the values of the metric entropy $h(G)$ and $h(\G_\Delta)$. We end this subsection by noting that classical results of L\'evy from 1936 are easily extended to our {\sc socf}s (this is shown more generally in Theorem 5.29 of \cite{DKS2025}, but the proof simplifies in our current setting).  Indeed, L\'evy proved\footnote{L\'evy used probabilistic arguments, but these results may also be proven using ergodic theory; see \cite{DK2002B}.} that for a.e.~$x$,
\begin{equation}\label{levy1}
\lim_{n\to\infty} \frac1n\log q_n=\frac12h(G),
\end{equation}
and from this, one also obtains that for a.e.~$x$, 
\begin{equation}\label{levy2}
\lim_{n\to\infty} \frac1n\log\left|x-\frac{p_n}{q_n}\right|=-h(G).
\end{equation}

\begin{cor}
Let $\Delta$ be a $\bar\nu_G$-continuity set with $\bar\nu_G(\Delta)>0$.  Then for a.e.~$x\in (0,1)$, the convergents $P_k^\Delta/Q_k^\Delta$ of $x=[\beta_0^\Delta;\alpha_0^\Delta/\beta_1^\Delta,\alpha_1^\Delta/\beta_2^\Delta,\dots]$ satisfy
\[\text{(i)}\quad \lim_{k\to\infty} \frac1k \log Q_k^\Delta=\frac12h(\G_\Delta)
\qquad \text{and}\qquad \text{(ii)}\quad \lim_{k\to\infty} \frac1k \log\left|x-\frac{P_k^\Delta}{Q_k^\Delta}\right|=-h(\G_\Delta).
\]
\end{cor}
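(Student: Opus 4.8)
The plan is to reduce the corollary to L\'evy's results \eqref{levy1} and \eqref{levy2} via the relation $P_k^\Delta/Q_k^\Delta = p_{j_{k+1}-1}/q_{j_{k+1}-1}$ established by Seidel's theorem, together with the equidistribution statement of Theorem \ref{equidist_thm}. The key observation is that $Q_k^\Delta = q_{n(k)}$ where $n(k) = j_{k+1}-1$, and that the same argument appearing in the proof of Theorem \ref{eps_C_so_thm} gives $\lim_{k\to\infty} k/(j_{k+1}-1) = \bar\nu_G(\Delta)$ for a.e.~$x$ (choosing $x$ in the full-measure set of Theorem \ref{equidist_thm}); equivalently $\lim_{k\to\infty} n(k)/k = 1/\bar\nu_G(\Delta)$. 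Note this requires no continuity-set hypothesis beyond what is already assumed, since $\Delta$ is taken to be a $\bar\nu_G$-continuity set in the statement.

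For part (i), I would write
\[
\frac1k \log Q_k^\Delta = \frac{n(k)}{k}\cdot \frac{1}{n(k)}\log q_{n(k)},
\]
and observe that as $k\to\infty$ we have $n(k)\to\infty$, so the second factor tends to $\pi^2/(12\log 2)$ by \eqref{levy1} (applied along the subsequence $n(k)$, which is legitimate since \eqref{levy1} is a genuine limit), while the first factor tends to $1/\bar\nu_G(\Delta)$ by the equidistribution argument above. Multiplying the two limits gives the claimed value $\pi^2/(12\log(2)\bar\nu_G(\Delta))$. For part (ii), the same bookkeeping applies: write
\[
\frac1k \log\left|x - \frac{P_k^\Delta}{Q_k^\Delta}\right| = \frac{n(k)}{k}\cdot \frac{1}{n(k)}\log\left|x - \frac{p_{n(k)}}{q_{n(k)}}\right|,
\]
and invoke \eqref{levy2} along the subsequence $n(k)$ together with $n(k)/k \to 1/\bar\nu_G(\Delta)$.

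The only genuine subtlety — and the step I would be most careful about — is confirming that one may pass to the subsequence $n(k)$ in L\'evy's theorems. This is immediate because \eqref{levy1} and \eqref{levy2} assert that the relevant sequences converge (not merely that a $\limsup$ or $\liminf$ equals something), and $n(k)$ is a strictly increasing sequence of positive integers tending to infinity for a.e.~$x$; any subsequence of a convergent sequence converges to the same limit. One should also note at the outset that the full-measure set on which everything holds is the intersection of the a.e.-sets from \eqref{levy1}, \eqref{levy2}, and Theorem \ref{equidist_thm}, which is still of full measure. Everything else is the routine arithmetic of multiplying two limits, so no real obstacle remains.
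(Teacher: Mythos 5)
Your proposal is correct and follows essentially the same route as the paper: identify $Q_k^\Delta=q_{j_{k+1}-1}$ and $P_k^\Delta=p_{j_{k+1}-1}$, insert the factor $(j_{k+1}-1)/(j_{k+1}-1)$, apply L\'evy's limits \eqref{levy1}--\eqref{levy2} along the subsequence, and use $\lim_k k/(j_{k+1}-1)=\bar\nu_G(\Delta)$ from the proof of Theorem \ref{eps_C_so_thm}. Your explicit remarks about passing to subsequences and intersecting the full-measure sets are just the details the paper leaves implicit.
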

\begin{proof}
Suppose $x$ belongs to the set of full measure satisfying Theorem \ref{equidist_thm}, \eqref{levy1} and \eqref{levy2}.  The claims follow by writing $P_k^\Delta=p_{j_{k+1}-1}$ and $Q_k^\Delta=q_{j_{k+1}-1}$; multiplying both quantities in the limits by $1=(j_{k+1}-1)/(j_{k+1}-1)$; and using \eqref{levy1}, \eqref{levy2} and the fact---as in the proof of Theorem \ref{eps_C_so_thm}---that $\lim_{k\to\infty}(j_{k+1}-1)/k=1/\bar\nu_G(\Delta)$.
\end{proof}

\section{Examples}\label{Examples}

We now turn our attention to two specific families of induced systems $(\Delta,\mathcal{B},\bar\nu_\Delta,\G_\Delta)$ and the {\sc socf}-expansions they produce.  The first family arises from jump transformations of the one-dimensional system $([0,1),\mathcal{B},\nu_G,G)$, and the second family is based on a classical result of Legendre regarding {\sc rcf}-convergents and the approximation coefficients defined in \eqref{approx_coeffs}.

\subsection{Jump continued fractions}\label{Jump continued fractions}

Fix some integer $b>1$, and let $\Delta=[0,1)\times [0,1/b]$.  Using Theorem \ref{eps_C_so_thm}, one finds that $x=[\beta_0^\Delta;\alpha_0^\Delta/\beta_1^\Delta,\alpha_1^\Delta/\beta_2^\Delta,\dots]$ is $(1/b,\log2/\log(1+1/b))$-superoptimal for a.e.~$x\in (0,1)$.  In order to better understand the system $(\Delta,\mathcal{B},\bar\nu_\Delta,\G_\Delta)$ and the {\sc socf}s it produces, we introduce some symbolic notation.  For $W=w_1w_2\cdots$ a finite or an infinite word in the alphabet of positive integers, let $[0;W]\coloneqq [0;w_1,w_2,\dots]$. If $W=w_1\cdots w_n$ is finite, let $|W|\coloneqq n$ denote the length of $W$, and let $p(W)$ and $q(W)$ be the unique, non-negative and relatively prime integers for which $p(W)/q(W)=[0;W]$.  We extend these notions to the empty word $\epsilon$ by setting $[0;\epsilon]\coloneqq 0$, $|\epsilon|\coloneqq 0$, $p(\epsilon)\coloneqq 0$ and $q(\epsilon)\coloneqq 1$.

Now, let 
\[\mathcal{W}_b\coloneqq \{1,\dots,b-1\}^*=\{\epsilon\}\cup\bigcup_{n\ge 1}\{1,\dots,b-1\}^n\]
denote the set of all finite words in the alphabet $\{1,\dots,b-1\}$, including the empty word.  For each $W\in\mathcal{W}_b$, let
\begin{equation}\label{Delta_W}
\Delta_W\coloneqq \{z=(x,y)\in\Delta\ |\ x=[0;Wa\cdots],\ a\ge b\}=\begin{cases}
\Big(\frac{p(W)}{q(W)},\frac{p(Wb)}{q(Wb)}\Big]\times[0,1/b], & |W|\ \text{even},\\
\vspace{-10pt}\\
\Big[\frac{p(Wb)}{q(Wb)},\frac{p(W)}{q(W)}\Big)\times[0,1/b], & |W|\  \text{odd},
\end{cases}
\end{equation}
and for each integer $a\ge b$, let $\Delta_W(a)\subset\Delta_W$ be defined by
\begin{equation}\label{Delta_W(a)}
\Delta_W(a)\coloneqq\{z=(x,y)\in\Delta\ |\ x=[0;Wa\cdots]\}=\begin{cases}
\Big(\frac{p(W(a+1))}{q(W(a+1))},\frac{p(Wa)}{q(Wa)}\Big]\times[0,1/b], & |W|\ \text{even},\\
\vspace{-10pt}\\
\Big[\frac{p(Wa)}{q(Wa)},\frac{p(W(a+1))}{q(W(a+1))}\Big)\times[0,1/b], & |W|\  \text{odd}.
\end{cases}
\end{equation}
Using the fact that $\Delta$ consists of all points $z=(x,y)\in\Omega$ for which $y=[0;W]$, where $W=\epsilon$ or $W=w_1w_2\cdots$ with $w_1\ge b$, we find that for $z\in\Delta_W(a)$,
\begin{equation}\label{G_Delta,M_Delta}
\G_\Delta(z)=\G^{|W|+1}(z) \qquad \text{and} \qquad M_\Delta(z)=\begin{pmatrix}
r_\Delta(z) & p_\Delta(z)\\
s_\Delta(z) & q_\Delta(z)
\end{pmatrix}=\begin{pmatrix}
p(W) & p(Wa)\\
q(W) & q(Wa)
\end{pmatrix}.
\end{equation}
Recalling the functions $\alpha_\Delta$ and $\beta_\Delta$ from \eqref{alpha_Delta,beta_Delta}, it follows that
\begin{equation}\label{alpha_Delta,beta_Delta-fib}
\alpha_\Delta(z)=\frac{(-1)^{|W'|}q(W)}{q(W')}, \quad \beta_\Delta(z)=q(Wa)+\frac{q(W)p(W')}{q(W')}, 
\end{equation}
where $W,W'\in \mathcal{W}_b$ and $a\ge b$ are the unique words and integer, respectively, for which $z\in\Delta_W(a)\cap\G_\Delta^{-1}(\Delta_{W'})$.

\begin{remark}
Notice from \eqref{G_Delta} and \eqref{G_Delta,M_Delta} that the map $\G_\Delta(z)$ depends only on the first coordinate of $z$.  We thus obtain a factor $([0,1),\mathcal{B},\nu_\Delta,G_\Delta)$ of $(\Delta,\mathcal{B},\bar\nu_\Delta,\G_\Delta)$ through the projection map $\pi:\Delta\to[0,1)$, $z=(x,y)\mapsto x$, satisfying $G_\Delta\circ\pi=\pi\circ\G_\Delta$, and one computes that the pushforward measure $\nu_\Delta\coloneqq \pi_*(\bar\nu_\Delta)$ has density $1/(\log(1+1/b)(b+x))$.  In fact, one can show that $G_\Delta$ is the \emph{jump transformation} of the Gauss map $G$ associated to the interval $[0,1/b]$; see \S11.4 of \cite{DK2021}.  The graph of the map $G_\Delta$---which is obtained explicitly from \eqref{G_Delta} and \eqref{M_Delta-fib}--\eqref{Delta_W(a)-fib} below through the projection $\pi$---is shown for $b=2$ in Figure \ref{fibonacci-1d_fig}.

This construction can be reversed and generalised: given some (forward) $\nu_G$-measurable $A\subset[0,1)$ with $\nu_G([0,1)\setminus G(A))=0$, one finds that the jump transformation $J_A:[0,1)\to[0,1)$ of $G$ associated to $A$ satisfies $J_A\circ \pi=\pi\circ \G_\Delta$, where $\Delta=\G(A\times [0,1])$.  Moreover, $\nu_A\coloneqq \pi_*(\bar\nu_\Delta)$ is the ergodic, absolutely continuous invariant probability measure for $J_A$.  Using this and the construction from \S\ref{Inducing contractions of rcfs}, it follows that each jump transformation of the Gauss map gives rise to a one-dimensional {\sc gcf}-algorithm.  In \S\ref{Legendre continued fractions} below, we shall see examples where $\G_\Delta(z)$ depends on both coordinates of $z$, and hence---as in the case of the {\sc ocf} (see Remark 7.3 of \cite{K1991})---there is \emph{not} necessarily an underlying one-dimensional system.  
\end{remark}

\begin{figure}[t]
\centering
\includegraphics[width=.45\textwidth]{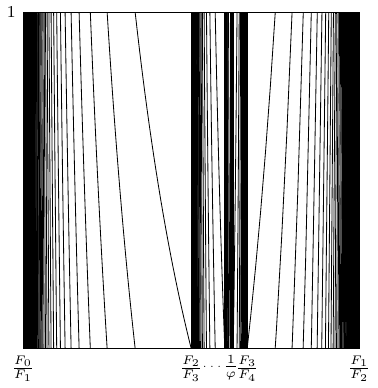}
\caption{The graph of $G_\Delta:[0,1)\to[0,1)$ when $\Delta=[0,1)\times [0,1/2]$.  Here $F_n$ is the $n^\text{th}$ Fibonacci number.}
\label{fibonacci-1d_fig}
\end{figure}

By Proposition \ref{prop: induced_determines_cf_alg}, the digits of the {\sc socf}s $x=[\beta_0^\Delta;\alpha_0^\Delta/\beta_1^\Delta,\alpha_1^\Delta/\beta_2^\Delta,\dots]$ are determined using \eqref{alpha_Delta,beta_Delta-fib} and the subsequent regions $\Delta_W(a)$ entered by the forward $\G_\Delta$-orbit of $z=(x,0)$.  We illustrate further in the specific case that $b=2$, 
which produces $(1/2,\log2/\log(3/2))$-{\sc socf}s.\footnote{Note that $\log 2/\log(3/2)\approx 1.709511\ldots>\log 2/\log\varphi\approx 1.440420\dots$, so a typical {\sc socf}-expansion $x=[\beta_0^\Delta;\alpha_0^\Delta/\beta_1^\Delta,\alpha_1^\Delta/\beta_2^\Delta,\dots]$ converges faster than the {\sc ocf}-expansion of $x$.}

\subsubsection{Fibonacci continued fractions}\label{Fibonacci continued fractions}

Let $b=2$, and note that $\mathcal{W}_2$ consists of the empty word $\epsilon$ and all finite strings of the digit $1$.  For each $W\in\mathcal{W}_2$, set $\Delta_n\coloneqq \Delta_W$ and $\Delta_n(a)\coloneqq \Delta_W(a)$, where $n=|W|$.  One finds that for $z\in\Delta_n(a)$,
\begin{equation}\label{M_Delta-fib}
M_\Delta(z)=\begin{pmatrix}
r_\Delta(z) & p_\Delta(z)\\
s_\Delta(z) & q_\Delta(z)
\end{pmatrix}=\begin{pmatrix}
p(W) & p(Wa)\\
q(W) & q(Wa)
\end{pmatrix}=\begin{pmatrix}
0 & 1\\
1 & 1
\end{pmatrix}^n\begin{pmatrix}
0 & 1\\
1 & a
\end{pmatrix}=
\begin{pmatrix}
F_n & aF_n+F_{n-1}\\
F_{n+1} & aF_{n+1}+F_n
\end{pmatrix},
\end{equation}
where $F_k$ is the $k^\text{th}$ Fibonacci number, i.e., $F_k=F_{k-1}+F_{k-2}$, $k\ge 1$, with $F_{-1}\coloneqq 1$ and $F_0\coloneqq 0$.  Using this, \eqref{Delta_W}--\eqref{G_Delta,M_Delta} and the fact that $2F_k+F_{k-1}=F_{k+2}$, we have 
\begin{equation}\label{Delta_n-fib}
\Delta_n=\begin{cases}
\Big(\frac{F_n}{F_{n+1}},\frac{F_{n+2}}{F_{n+3}}\Big]\times[0,1/2], & n\ \text{even},\\
\vspace{-10pt}\\
\Big[\frac{F_{n+2}}{F_{n+3}},\frac{F_n}{F_{n+1}}\Big)\times[0,1/2], & n\  \text{odd},
\end{cases}
\end{equation}
and 
\begin{align}\label{Delta_W(a)-fib}
\Delta_n(a)&=\begin{cases}
\Big(\frac{(a+1)F_n+F_{n-1}}{(a+1)F_{n+1}+F_{n}},\frac{aF_n+F_{n-1}}{aF_{n+1}+F_{n}}\Big]\times[0,1/2], & n\ \text{even},\\
\vspace{-10pt}\\
\Big[\frac{aF_n+F_{n-1}}{aF_{n+1}+F_{n}},\frac{(a+1)F_n+F_{n-1}}{(a+1)F_{n+1}+F_{n}}\Big)\times[0,1/2], & n\  \text{odd},
\end{cases}\notag\\
&=\left\{z=(x,y)\in \Delta_n\ \middle|\ \left\lfloor\frac{F_{n-1}-F_nx}{F_{n+1}x-F_n}\right\rfloor=a\right\};
\end{align}
see Figure \ref{fibonacci_fig}.

\begin{figure}[t]
\centering
\includegraphics[width=.45\textwidth]{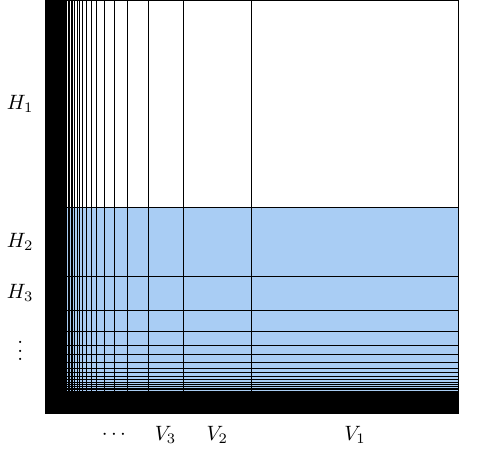}
\includegraphics[width=.45\textwidth]{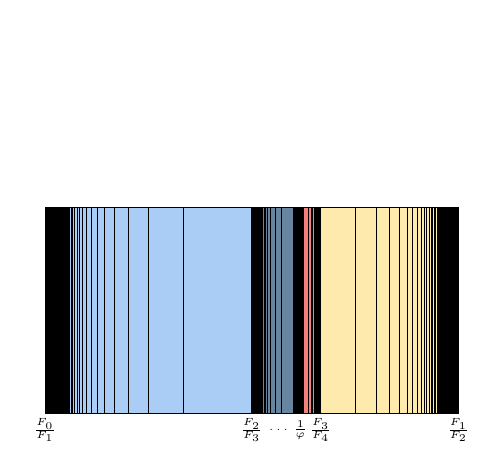}
\caption{Left: The region $\Delta=[0,1)\times[0,1/2]\subset \Omega$.  Right: The regions $\Delta_n(a)\subset \Delta$, indicated by light blue and dark blue for $n$ even and tan and red for $n$ odd.}
\label{fibonacci_fig}
\end{figure}

Recall that the {\sc socf}-expansion $[\beta_0^\Delta;\alpha_0^\Delta/\beta_1^\Delta,\alpha_1^\Delta/\beta_2^\Delta,\dots]$ of an irrational $x$ is defined if and only if the $\G$-orbit of $z=(x,0)$ enters $\Delta$ infinitely often.  In this case, this occurs if and only if the tail of the {\sc rcf}-expansion of $x$ does not end only in $1$'s, or, equivalently, $x\notin G^{-n}(\{1/\varphi\})$ for any $n\ge 0$, where $\varphi$ is the golden ratio.  In particular, $x=[\beta_0^\Delta;\alpha_0^\Delta/\beta_1^\Delta,\alpha_1^\Delta/\beta_2^\Delta,\dots]$ is defined for any transcendental $x$.

\begin{eg}\label{eg: jump}
Let $z=(x,0)$ with $x=\pi-3$.  Using \eqref{G_Delta} and \eqref{M_Delta-fib}--\eqref{Delta_W(a)-fib}, one computes the subsequent regions $\Delta_n(a)$ entered by the $\G_\Delta$-orbit $z_k^\Delta=\G_\Delta^k(z)$ of $z$:
\begin{align*}
z_0^\Delta&\in \Delta_0(7), & z_1^\Delta&\in \Delta_0(15), & z_2^\Delta&\in \Delta_1(292), & z_3^\Delta&\in \Delta_3(2), & z_4^\Delta&\in \Delta_1(3), & z_5^\Delta&\in \Delta_1(14),\\
z_6^\Delta&\in \Delta_0(2), & z_7^\Delta&\in \Delta_2(2), & z_8^\Delta&\in \Delta_0(2), & z_9^\Delta&\in \Delta_0(2), & z_{10}^\Delta&\in \Delta_0(2), & z_{11}^\Delta&\in \Delta_1(84),\dots.
\end{align*}
Using this, Proposition \ref{prop: induced_determines_cf_alg}, \eqref{alpha_Delta,beta_Delta-fib} and \eqref{M_Delta-fib}, we compute
\begin{multline*}
x=[\beta_0^\Delta;\alpha_0^\Delta/\beta_1^\Delta,\alpha_1^\Delta/\beta_2^\Delta,\dots]\\
=[0;1/7,1/16,-1/(881/3),(-1/3)/11,-3/5,-1/15,1/(5/2),(1/2)/5,2/2,1/2,1/3,\dots],
\end{multline*}
which has convergents
\begin{multline*}
\left(\frac{P_k^\Delta}{Q_k^\Delta}\right)_{k\ge 0}=\Bigg(\frac01,\frac17,\frac{16}{113},\frac{14093}{99532},\frac{51669}{364913},\frac{244252}{1725033},\frac{3612111}{25510582},\frac{18549059}{131002976},\\
\frac{48178703}{340262731},\frac{114906465}{811528438},\frac{277991633}{1963319607},\frac{948881364}{6701487259},\dots\Bigg).
\end{multline*}
The corresponding approximation coefficients are plotted in Figure \ref{fig: approx_coefs}.
\end{eg}

\begin{figure}[t]
\centering
\includegraphics[width=.5\textwidth]{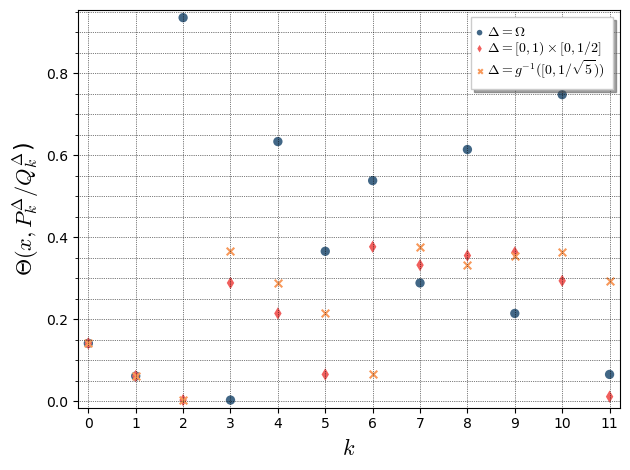}
\caption{The first several approximation coefficients of $x=\pi-3$ and $P_k^\Delta/Q_k^\Delta$ are plotted for $\Delta=\Omega$ (i.e., the {\sc rcf}-expansion), $\Delta=[0,1)\times[0,1/2]$ (from Example \ref{eg: jump}) and $\Delta= g^{-1}([0,1/\sqrt{5}))$ (from Example \ref{eg: hurwitz}).}
\label{fig: approx_coefs}
\end{figure}

\subsection{Legendre continued fractions}\label{Legendre continued fractions}

We begin this subsection by recalling a classical result of Legendre in the theory of {\sc rcf}s.
\begin{thm}[Legendre, 1798 \cite{Le1798}]\label{thm: Legendre}
If $x$ is irrational and $p/q$ is some reduced rational satisfying $\Theta(x,p/q)<1/2$, then $p/q$ is a {\sc rcf}-convergent of $x$, i.e., $p/q=p_n/q_n$ for some $n\ge 0$.  Moreover, the constant $1/2$ is optimal.  
\end{thm}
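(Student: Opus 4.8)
The plan is to prove the two assertions separately, by elementary continued-fraction manipulations (as in, e.g., \cite{K97}).

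\emph{The implication.} Suppose $p/q$ is reduced with $q^2|x-p/q|<1/2$. A rational number has exactly two finite {\sc rcf}-representations, and these have lengths of opposite parity; I would exploit this freedom. Write $p/q=[a_0;a_1,\dots,a_n]$ for one of them (the parity of $n$ to be fixed below), let $p_k/q_k$ be its convergents, so $p_n=p$ and $q_n=q$, and set $\delta\coloneqq q_nx-p_n$ (nonzero, as $x$ is irrational). Define the ``remainder''
\[\omega\coloneqq\frac{p_{n-1}-q_{n-1}x}{q_nx-p_n},\]
the image of the irrational $x$ under an integer Möbius transformation of determinant $(-1)^{n-1}\neq0$, hence irrational. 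The standard identity $[a_0;a_1,\dots,a_n,t]=(p_nt+p_{n-1})/(q_nt+q_{n-1})$ gives $x=[a_0;a_1,\dots,a_n,\omega]$. Thus if I can show $\omega>1$, then writing its {\sc rcf}-expansion $\omega=[a_{n+1};a_{n+2},\dots]$ (with $a_{n+1}=\lfloor\omega\rfloor\ge1$) yields $x=[a_0;a_1,\dots,a_n,a_{n+1},a_{n+2},\dots]$, a genuine {\sc rcf}-expansion, whence $p/q=[a_0;\dots,a_n]=p_n/q_n$ is an {\sc rcf}-convergent of $x$ by uniqueness of {\sc rcf}-expansions.

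\emph{Getting $\omega>1$.} Using $p_nq_{n-1}-p_{n-1}q_n=(-1)^{n-1}$, a short computation rewrites
\[\frac1\omega=\frac{-q_n\delta}{(-1)^{n-1}+q_{n-1}\delta}.\]
The representation can always be chosen with $q_{n-1}\le q_n$, and since $|\delta|<1/(2q_n)$ this gives $|q_{n-1}\delta|<1/2$, so the denominator has the sign of $(-1)^{n-1}$. Now I would \emph{choose} the parity of $n$ so that $(-1)^{n-1}$ is opposite to the sign of $\delta$; then numerator and denominator above have the same sign, so $1/\omega>0$, and moreover
\[\frac1\omega=\frac{q_n|\delta|}{1-q_{n-1}|\delta|}<1\quad\Longleftrightarrow\quad (q_n+q_{n-1})|\delta|<1,\]
which holds because $(q_n+q_{n-1})|\delta|\le 2q_n|\delta|<1$. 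Hence $\omega>1$. The degenerate cases (e.g.\ $q=1$, where one representation is $[a_0]$ with $n=0$) are absorbed by the conventions $p_{-1}=1$, $q_{-1}=0$. I expect the one genuinely delicate point to be this parity choice: without it $\omega$ may come out negative; and it is precisely the hypothesis $|\delta|<1/(2q_n)$—strictly stronger than the $|\delta|<1/q_n$ that merely makes $p/q$ a best approximation—that forces $(q_n+q_{n-1})|\delta|<1$.

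\emph{Optimality of $1/2$.} Here it suffices to exhibit, for every $\eta>0$, an irrational $x$ and a reduced rational that is \emph{not} an {\sc rcf}-convergent of $x$ with approximation coefficient in $(1/2,\,1/2+\eta)$. I would take the mediant $r/s\coloneqq(p_{n-1}+p_n)/(q_{n-1}+q_n)$ of two consecutive {\sc rcf}-convergents of $x$ (with $n\ge1$): it is automatically reduced, and when $a_{n+1}\ge2$ its denominator satisfies $q_n<s<q_{n+1}$, so it is not a convergent. Writing $x=(p_n\alpha+p_{n-1})/(q_n\alpha+q_{n-1})$ with $\alpha=[a_{n+1};a_{n+2},\dots]$, a direct computation gives
\[\Theta(x,r/s)=\frac{(\alpha-1)(q_{n-1}+q_n)}{\alpha q_n+q_{n-1}},\]
which exceeds $1/2$ (consistent with the first part) and tends to $1/2$ as one fixes $a_{n+1}=2$, sends $q_{n-1}/q_n\to0$ (by taking $a_n$ large) and $\alpha\to2^+$ (by taking $a_{n+2}$ large). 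Concretely, the irrational $x_N$ with {\sc rcf}-expansion $[0;1,N,2,N,2,N,\dots]$, together with $r_N/s_N=(N+1)/(N+2)$—the mediant of its first two convergents—realizes $\Theta(x_N,r_N/s_N)\to1/2$ as $N\to\infty$, so $1/2$ cannot be replaced by any larger constant. I expect the bookkeeping here (verifying $q_n<s<q_{n+1}$, that $x_N$ has the claimed expansion, and that $\Theta(x_N,r_N/s_N)>1/2$) to be routine, with no real obstacle.
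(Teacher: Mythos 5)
The paper does not prove this statement at all---it is quoted as a classical theorem of Legendre with a citation---so there is no internal proof to compare against; judged on its own, your argument is correct and is the standard one. Two small remarks. First, the point you single out as delicate (choosing the parity of $n$ while keeping $q_{n-1}\le q_n$) is in fact automatic: in the two {\sc rcf}-representations of $p/q$ the penultimate denominators are $q_{n-1}$ and $q-q_{n-1}$ respectively, both at most $q$, so the parity of $n$ can be chosen opposite to the sign of $\delta=qx-p$ with no conflict, and then your computation $1/\omega=q_n|\delta|/(1-q_{n-1}|\delta|)<1$ (using $(q_n+q_{n-1})|\delta|\le 2q_n|\delta|<1$) goes through, including the case $q=1$ via $p_{-1}=1$, $q_{-1}=0$; concatenating the expansion of $\omega>1$ then exhibits $p/q$ as a convergent by uniqueness of the {\sc rcf}-expansion of an irrational. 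Second, for optimality your mediant formula $\Theta(x,r/s)=(\alpha-1)(q_{n-1}+q_n)/(\alpha q_n+q_{n-1})$ is correct, and the family $x_N=[0;1,N,2,N,2,\dots]$ with $r_N/s_N=(N+1)/(N+2)$ works; just note that $(N+1)/(N+2)$ is the mediant of the convergents $p_1/q_1=1/1$ and $p_2/q_2=N/(N+1)$ (indices $1$ and $2$), not of the ``first two'' convergents $0/1$ and $1/1$. With $q_2=N+1<N+2<q_3=2N+3$ the rational $r_N/s_N$ is not a convergent, while $\Theta(x_N,r_N/s_N)>1/2$ tends to $1/2$ as $N\to\infty$, which is exactly the sense of optimality stated after the theorem.
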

That the constant $1/2$ is optimal means that for any $\varepsilon>1/2$, there exists some irrational $x$ and some reduced $p/q$ satisfying $\Theta(x,p/q)<\varepsilon$, but $p/q$ is \emph{not} a {\sc rcf}-convergent of $x$.  Now fix $0<\varepsilon_0\le 1/2$, and let $\Delta=g^{-1}([0,\varepsilon_0))$ with $g$ is as in \eqref{g}.  An immediate consequence of Theorems \ref{eps_C_so_thm} and \ref{thm: Legendre} and a computation of $\bar\nu_G(\Delta)=\varepsilon_0/\log2$ is the following:

\begin{cor}\label{leg_socf}
With $\Delta$ as above, $x=[\beta_0^\Delta;\alpha_0^\Delta/\beta_1^\Delta,\alpha_1^\Delta/\beta_2^\Delta,\dots]$ is $(\varepsilon_0,\log2/\varepsilon_0)$-superoptimal for a.e.~$x\in(0,1)$, and the convergents $P_k^\Delta/Q_k^\Delta$ are precisely the rationals $p/q$---ordered with increasing denominators---for which $\Theta(x,p/q)<\varepsilon_0$.
\end{cor}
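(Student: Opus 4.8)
The plan is to apply Theorem \ref{eps_C_so_thm} directly to the set $\Delta = g^{-1}([0,\varepsilon_0))$ and then identify the convergents. For the first half of the corollary, I would first check the two hypotheses of Theorem \ref{eps_C_so_thm}. Hypothesis (a), namely $\Delta \subset g^{-1}([0,\varepsilon_0]) \subset g^{-1}([0,\varepsilon_0])$, is immediate since $[0,\varepsilon_0) \subset [0,\varepsilon_0]$. For hypothesis (b), I would compute $\bar\nu_G(\Delta) = \varepsilon_0/\log 2$ by integrating the density $1/(\log(2)(1+xy)^2)$ over the region $\{(x,y) \in \Omega : y/(1+xy) < \varepsilon_0\}$; the substitution $u = g(x,y) = y/(1+xy)$ (equivalently $y = u/(1-ux)$) should linearise this, and since $\varepsilon_0 \le 1/2$ one has $\varepsilon_0/\log 2 \le 1/(2\log 2) < 1$, so $\Delta$ is a proper positive-measure subregion; taking $C = \log 2/\varepsilon_0$ gives $\bar\nu_G(\Delta) = 1/C$, so (b) holds with equality. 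I would also remark that $\Delta$ is a $\bar\nu_G$-continuity set: its boundary lies on the analytic curve $g = \varepsilon_0$ together with pieces of $\partial\Omega$, all of which are Lebesgue-null. Theorem \ref{eps_C_so_thm} then yields that $x = [\beta_0^\Delta;\alpha_0^\Delta/\beta_1^\Delta,\dots]$ is $(\varepsilon_0,\log 2/\varepsilon_0)$-superoptimal for a.e.\ $x$.

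For the identification of the convergents, I would argue as follows. Fix $x$ in the full-measure set from Theorem \ref{equidist_thm} and let $z = (x,0)$. By construction (Seidel's theorem, Theorem \ref{thm: seidel}, as specialised in \S\ref{Inducing contractions of rcfs}), the $k^\text{th}$ convergent is $P_k^\Delta/Q_k^\Delta = p_{j_{k+1}-1}/q_{j_{k+1}-1}$, where $(j_k)_{k \ge 1}$ lists the indices $j \ge 1$ with $\G^j(z) \in \Delta$. By Proposition \ref{Theta_and_g}, $\Theta(x, p_{n-1}/q_{n-1}) = g(z_n)$ where $z_n = \G^n(z)$; hence $\G^n(z) \in \Delta = g^{-1}([0,\varepsilon_0))$ if and only if $\Theta(x, p_{n-1}/q_{n-1}) < \varepsilon_0$. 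Therefore the set $\{j_k - 1 : k \ge 1\}$ is exactly $\{n \ge 0 : \Theta(x, p_n/q_n) < \varepsilon_0\}$, so the convergents $P_k^\Delta/Q_k^\Delta$ are precisely those {\sc rcf}-convergents $p_n/q_n$ with approximation coefficient below $\varepsilon_0$, listed in order of increasing index $n$ (equivalently, increasing denominator, since $q_n$ is strictly increasing). Finally, by Legendre's theorem (Theorem \ref{thm: Legendre}), since $\varepsilon_0 \le 1/2$, every reduced rational $p/q$ with $\Theta(x, p/q) < \varepsilon_0 \le 1/2$ is an {\sc rcf}-convergent of $x$; combined with the previous sentence, the $P_k^\Delta/Q_k^\Delta$ are exactly all reduced $p/q$ with $\Theta(x, p/q) < \varepsilon_0$, ordered by increasing denominator.

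The main obstacle — really the only non-routine point — is the measure computation $\bar\nu_G(\Delta) = \varepsilon_0/\log 2$, where one must be careful about the geometry of the region $\{g < \varepsilon_0\} \cap \Omega$ inside the square $[0,1) \times [0,1]$ and set up the change of variables correctly; one should verify that for $\varepsilon_0 \le 1/2$ the level curve $g = \varepsilon_0$ meets $\Omega$ in the expected way (in particular that $g \le 1/2$ throughout $\Omega$, with equality only at the corner $(1,1)$, so the constraint $\varepsilon_0 \le 1/2$ is exactly what is needed for the region to be non-degenerate and for Legendre's theorem to apply). Everything else is a direct assembly of Theorem \ref{eps_C_so_thm}, Proposition \ref{Theta_and_g}, and Theorem \ref{thm: Legendre}.
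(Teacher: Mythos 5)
Your proposal is correct and follows essentially the same route as the paper, which presents the corollary as an immediate consequence of Theorem \ref{eps_C_so_thm}, the computation $\bar\nu_G(\Delta)=\varepsilon_0/\log 2$, Proposition \ref{Theta_and_g} (to identify the return times to $\Delta$ with the indices $n$ for which $\Theta(x,p_n/q_n)<\varepsilon_0$), and Legendre's theorem. One small correction to your closing remark: it is \emph{not} true that $g\le 1/2$ on $\Omega$ (e.g.\ $g(0,1)=1$); the actual role of $\varepsilon_0\le 1/2$ in the measure computation is that the level curve $\{g=\varepsilon_0\}$, i.e.\ $y=\varepsilon_0/(1-\varepsilon_0 x)$, stays below $y=1$ for all $x\in[0,1)$ precisely when $\varepsilon_0\le 1/2$, which is what makes the fibrewise integral $\int_0^{\varepsilon_0/(1-\varepsilon_0 x)}(1+xy)^{-2}\,dy=\varepsilon_0$ independent of $x$ and gives $\bar\nu_G(\Delta)=\varepsilon_0/\log 2$; this slip is immaterial to your argument, and $\varepsilon_0\le 1/2$ is of course also exactly what lets Legendre's theorem identify every reduced $p/q$ with $\Theta(x,p/q)<\varepsilon_0$ as an {\sc rcf}-convergent.
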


\begin{remark}
In \cite{K1989}, Kraaikamp shows that Minkowski's \emph{diagonal continued fraction} is an $S$-expansion, and the corresponding two-dimensional dynamical system is explicitly determined.  Up to the change of indices from Remark \ref{remark: indexing}, one finds that this system is $(\Delta,\mathcal{B},\bar\nu_\Delta,\G_\Delta)$ where $\Delta=g^{-1}([0,1/2))$.  Hence for $\varepsilon_0<1/2$, the {\sc socf}s of Corollary \ref{leg_socf} may be viewed as improvements of the diagonal continued fraction.
\end{remark}

\begin{remark}\label{lagrange_spectrum}
In general, $x=[\beta_0^\Delta;\alpha_0^\Delta/\beta_1^\Delta,\alpha_1^\Delta/\beta_2^\Delta,\dots]$ is only defined on a full-measure subset of $(0,1)$, i.e., on the set of irrational $x$ for which the $\G$-orbit of $z=(x,0)$ enters $\Delta$ infinitely often.  We remark that for $\Delta$ as above, the set of $x$ for which $x=[\beta_0^\Delta;\alpha_0^\Delta/\beta_1^\Delta,\alpha_1^\Delta/\beta_2^\Delta,\dots]$ is defined is related to the \emph{Lagrange spectrum} $\mathcal{L}=\{L(x)\ |\ x\in\mathbb{R}\backslash\mathbb{Q}\}$, where the \emph{Lagrange number} $L(x)$ is the supremum over all $L>0$ for which $\Theta(x,p/q)<1/L$ holds infinitely often.  For $\varepsilon_0$ large enough, one can guarantee that $x=[\beta_0^\Delta;\alpha_0^\Delta/\beta_1^\Delta,\alpha_1^\Delta/\beta_2^\Delta,\dots]$ is defined \emph{for all} irrational $x$.  In what follows, we consider the smallest value of $\varepsilon_0$ for which this is possible and determine the underlying system $(\Delta,\mathcal{B},\bar\nu_\Delta,\G_\Delta)$ explicitly.  
\end{remark}

\subsubsection{Hurwitz--Borel continued fractions}\label{Hurwitz--Borel continued fractions}

In 1891, Hurwitz proved the following:
\begin{thm}[Hurwitz, 1891 \cite{H1891}]\label{thm: Hurwitz}
For any irrational $x$, there exist infinitely many reduced rationals $p/q$ for which $\Theta(x,p/q)<1/\sqrt{5}$.  The constant $1/\sqrt{5}$ is optimal.
\end{thm}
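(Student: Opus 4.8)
The plan is to derive both assertions from the dictionary between approximation coefficients and the natural extension given by Proposition~\ref{Theta_and_g}, together with Legendre's theorem. For the existence statement I would first note that, since $1/\sqrt5<1/2$, Theorem~\ref{thm: Legendre} forces every reduced $p/q$ with $\Theta(x,p/q)<1/\sqrt5$ to be an {\sc rcf}-convergent of $x$; so it suffices to produce infinitely many $n$ with $\Theta(x,p_n/q_n)<1/\sqrt5$. Writing $z=(x,0)$ and $z_n=\G^n(z)=(x_n,y_n)$, Proposition~\ref{Theta_and_g} gives $\Theta(x,p_{n-1}/q_{n-1})=g(z_n)=y_n/(1+x_ny_n)$; moreover $x_n=G^n(x)=1/\alpha_{n+1}$, where $\alpha_{n+1}=[a_{n+1};a_{n+2},\dots]$ is the complete quotient, and $y_n=q_{n-1}/q_n$. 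Putting $t_n\coloneqq\alpha_{n+1}>1$ and $s_n\coloneqq q_{n-1}/q_n\in(0,1]$, the recurrences $\alpha_n=a_n+1/\alpha_{n+1}$ and $q_n/q_{n-1}=a_n+q_{n-2}/q_{n-1}$ give, after a one-line manipulation, the identities
\[\Theta(x,p_{n-1}/q_{n-1})=\frac{t_ns_n}{t_n+s_n}\qquad\text{and}\qquad\Theta(x,p_n/q_n)=\frac{1}{t_n+s_n}.\]

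The crux is the lemma: \emph{if $\Theta(x,p_{n-1}/q_{n-1})\ge 1/\sqrt5$ and $\Theta(x,p_n/q_n)\ge 1/\sqrt5$ for some $n\ge 1$, then $a_n=a_{n+1}=1$.} Via the identities above, the hypotheses read $t_n+s_n\le\sqrt5$ and $\sqrt5\,t_ns_n\ge t_n+s_n$. Since $t_n>1$, the second gives $\sqrt5\,s_n>1$, so $s_n>1/\sqrt5$; then $t_n\le\sqrt5-s_n<4/\sqrt5<2$, whence $a_{n+1}=\lfloor t_n\rfloor=1$. Dividing $\sqrt5\,t_ns_n\ge t_n+s_n$ by $t_n$ and using $t_n<2$ forces $\sqrt5\,s_n>1+s_n/2$, i.e.\ $s_n>2/(2\sqrt5-1)>1/2$; hence $q_n/q_{n-1}=1/s_n<2$, and since $q_n/q_{n-1}=a_n+q_{n-2}/q_{n-1}\ge a_n$ this gives $a_n=1$. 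With the lemma in hand, suppose toward a contradiction that $\Theta(x,p_i/q_i)\ge 1/\sqrt5$ for all $i\ge N$. Applying the lemma at all large $n$ shows $a_m=1$ for every $m\ge N_0$, some $N_0$, so for $n\ge N_0$ the complete quotient $\alpha_{n+1}$ equals $\varphi$ exactly and $\Theta(x,p_n/q_n)=1/(\varphi+s_n)$, with $s_n=1/(1+s_{n-1})$ for $n>N_0$. The map $w\mapsto 1/(1+w)$ is strictly decreasing with unique fixed point $1/\varphi=\varphi-1$, and each $s_n\in\mathbb Q$ differs from $1/\varphi$; hence consecutive $s_n$ straddle $1/\varphi$, so $s_n>1/\varphi$ for infinitely many $n$. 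For such $n$, using $\varphi+1/\varphi=2\varphi-1=\sqrt5$, we get $\Theta(x,p_n/q_n)=1/(\varphi+s_n)<1/(\varphi+1/\varphi)=1/\sqrt5$, contradicting the assumption. Thus $\Theta(x,p_n/q_n)<1/\sqrt5$ infinitely often, which---by Legendre---yields infinitely many reduced $p/q$ with $\Theta(x,p/q)<1/\sqrt5$.

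For optimality I would take $x=1/\varphi=[0;1,1,1,\dots]$, with {\sc rcf}-convergents $p_n/q_n=F_n/F_{n+1}$ ($F_k$ the Fibonacci numbers). From $F_{n+1}-\varphi F_n=(-1/\varphi)^n$ (Binet) one computes $\Theta(x,p_n/q_n)=q_n^2|x-p_n/q_n|=F_{n+1}/\varphi^{n+1}=\big(1-(-1)^{n+1}\varphi^{-2(n+1)}\big)/\sqrt5\to 1/\sqrt5$. Hence for any $c>\sqrt5$ only finitely many $n$ satisfy $\Theta(x,p_n/q_n)<1/c$, and since $1/c<1/\sqrt5<1/2$, Theorem~\ref{thm: Legendre} shows that every reduced $p/q$ with $\Theta(x,p/q)<1/c$ is among these finitely many $p_n/q_n$; so $1/\sqrt5$ cannot be lowered.

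The one delicate step is the lemma---specifically, getting the hypotheses translated into $t_n+s_n\le\sqrt5$ and $\sqrt5\,t_ns_n\ge t_n+s_n$ with the index bookkeeping among $\alpha_{n+1}$, $q_{n-1}/q_n$ and $\G^n(x,0)$ handled correctly, and then squeezing out both $a_n=1$ and $a_{n+1}=1$. The oscillation argument for $(s_n)$ and the Fibonacci computation are routine.
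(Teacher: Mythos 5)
Your proof is correct. Note, though, that the paper does not prove Theorem \ref{thm: Hurwitz} at all: it cites Hurwitz, observes that the first statement is a consequence of Borel's theorem (Theorem \ref{thm: Borel}), and in \S\ref{Hurwitz--Borel continued fractions} points out that Borel's theorem itself drops out of the explicit computation that the hitting time to $\Delta=g^{-1}([0,1/\sqrt{5}))$ is at most $3$ (i.e.\ $\Delta_n=\varnothing$ for $n>3$), while the optimality clause is only asserted via the example $x=1/\varphi$. Your argument is a self-contained arithmetic proof of the same classical shape that underlies that computation: your identities $\Theta_{n-1}=t_ns_n/(t_n+s_n)$ and $\Theta_n=1/(t_n+s_n)$ are exactly the natural-extension coordinates of Propositions \ref{Theta_and_g} and \ref{g(z_n)} (indeed $z_n=(1/t_n,s_n)$ and $g(z_n)=\Theta_{n-1}$); your key lemma (two consecutive approximation coefficients $\ge 1/\sqrt{5}$ force $a_n=a_{n+1}=1$), whose index bookkeeping I checked and which is fine, plays the role of the paper's region-by-region description of $\Delta_1,\Delta_2,\Delta_3$; and the all-ones tail is disposed of by the oscillation of $s_n=q_{n-1}/q_n$ about the fixed point $1/\varphi$ of $w\mapsto 1/(1+w)$, which is sound since each $s_n$ is rational. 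The Fibonacci computation $\Theta(1/\varphi,F_n/F_{n+1})=F_{n+1}/\varphi^{n+1}\to 1/\sqrt{5}$ together with Legendre (Theorem \ref{thm: Legendre}) correctly establishes optimality in the sense the paper intends. Two small remarks: Legendre is not actually needed for the existence half (infinitely many convergents already give infinitely many reduced rationals), only for optimality; and the paper's dynamical route yields the slightly stronger Borel statement---one of any three consecutive convergents works---as a byproduct of the bound $j(z)\le 3$, whereas your route only gives ``infinitely often'' but is independent of the induced-system machinery and covers the optimality statement explicitly.
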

Here, the constant $1/\sqrt{5}$ is optimal in the sense that for any $\varepsilon<1/\sqrt{5}$, there exist irrational $x$ (e.g., $x=1/\varphi=[0;1,1,1,\dots]$) for which $\Theta(x,p/q)<\varepsilon$ holds for only finitely many $p/q$.  The first statement of Hurwitz's theorem is a simple consequence of the following, later result of Borel:
\begin{thm}[Borel, 1903 \cite{B1903}]\label{thm: Borel}
For any irrational $x$ and any $n\ge 0$, at least one of $\Theta(x,p_{n+j}/q_{n+j})$, $j\in\{0,1,2\}$, is less than $1/\sqrt{5}$.
\end{thm}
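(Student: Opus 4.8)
The plan is to argue by contradiction: I show that no irrational $x$ can satisfy $\Theta(x,p_{n+j}/q_{n+j})\ge 1/\sqrt5$ for all three of $j=0,1,2$. Write $\theta_m:=\Theta(x,p_m/q_m)$ and let $\alpha_k:=[a_k;a_{k+1},a_{k+2},\dots]>1$ denote the $k^{\text{th}}$ complete quotient of $x$. The engine of the proof is the pair of identities, valid for all $m\ge 0$,
\[
\theta_m=\frac{\alpha_{m+2}\,(q_m/q_{m+1})}{\alpha_{m+2}+q_m/q_{m+1}}=\frac{1}{\alpha_{m+1}+q_{m-1}/q_m}.
\]
The first equality is immediate from Proposition~\ref{Theta_and_g}, which gives $\theta_m=g(\G^{m+1}(x,0))$: since $\G$ acts as the two-sided shift on {\sc rcf}-expansions and $x=[0;a_1,a_2,\dots]$, the point $\G^{m+1}(x,0)$ has first coordinate $[0;a_{m+2},a_{m+3},\dots]=1/\alpha_{m+2}$ and second coordinate $q_m/q_{m+1}$, and substituting these into $g(X,Y)=Y/(1+XY)$ yields the stated expression. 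The second equality follows from the first via the relations $\alpha_{m+1}=a_{m+1}+1/\alpha_{m+2}$ and $q_{m+1}/q_m=a_{m+1}+q_{m-1}/q_m$. The upshot is that for each $m\ge 1$ both $\theta_{m-1}$ and $\theta_m$ are explicit functions of the single pair $(u,v):=(\alpha_{m+1},\,q_{m-1}/q_m)\in(1,\infty)\times(0,1]$; indeed, the first identity at index $m-1$ gives $\theta_{m-1}=uv/(u+v)$, and the second identity at index $m$ gives $\theta_m=1/(u+v)$.

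First I would prove a lemma on two consecutive coefficients: \emph{if $m\ge 1$ and $\theta_{m-1},\theta_m\ge 1/\sqrt5$, then $a_{m+1}=1$, $\alpha_{m+1}\le\varphi$, and $\frac{\alpha_{m+1}}{\sqrt5\,\alpha_{m+1}-1}\le q_{m-1}/q_m\le\sqrt5-\alpha_{m+1}$.} In the notation above, the two hypotheses read $u+v\le\sqrt5$ and $\sqrt5\,uv\ge u+v$, i.e.\ $v\in\bigl[\tfrac{u}{\sqrt5\,u-1},\,\sqrt5-u\bigr]$ (here $\sqrt5\,u-1>0$ since $u>1$). This interval is nonempty only if $\tfrac{u}{\sqrt5\,u-1}\le\sqrt5-u$, which rearranges to $u^2-\sqrt5\,u+1\le 0$, i.e.\ $\tfrac{\sqrt5-1}{2}\le u\le\tfrac{\sqrt5+1}{2}=\varphi$. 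Since $u=\alpha_{m+1}>1$ this forces $a_{m+1}=\lfloor u\rfloor=1$, and $v$ lies in the asserted interval.

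Then I would run the main argument. Suppose $\theta_n,\theta_{n+1},\theta_{n+2}\ge 1/\sqrt5$ for some $n\ge 0$. Applying the lemma to the pair $(\theta_n,\theta_{n+1})$ (case $m=n+1$) gives $a_{n+2}=1$, $\alpha_{n+2}\le\varphi$, and $q_n/q_{n+1}\in\bigl[\tfrac{\alpha_{n+2}}{\sqrt5\,\alpha_{n+2}-1},\,\sqrt5-\alpha_{n+2}\bigr]$; applying it to $(\theta_{n+1},\theta_{n+2})$ (case $m=n+2$) gives $\alpha_{n+3}\le\varphi$. Since $a_{n+2}=1$ we have $\alpha_{n+2}=1+1/\alpha_{n+3}$, so $\alpha_{n+3}\le\varphi$ forces $\alpha_{n+2}\ge 1+1/\varphi=\varphi$; combined with $\alpha_{n+2}\le\varphi$ this gives $\alpha_{n+2}=\varphi$. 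A one-line computation shows $\tfrac{\varphi}{\sqrt5\,\varphi-1}=1/\varphi=\sqrt5-\varphi$, so the two bounds on $q_n/q_{n+1}$ collapse to the equality $q_n/q_{n+1}=1/\varphi$ — impossible, since $q_n/q_{n+1}\in\mathbb{Q}$ while $\varphi\notin\mathbb{Q}$. This contradiction proves the theorem.

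I expect the only real difficulty to be administrative rather than conceptual: carefully deriving the two expressions for $\theta_m$ and their index-shifts from Proposition~\ref{Theta_and_g} together with the mirror formula $q_{m-1}/q_m=[0;a_m,a_{m-1},\dots,a_1]$, and dispatching the low-index edge cases (e.g.\ $n=0$), which cause no trouble since the pairs actually used in the argument involve only $q_n/q_{n+1}$ and $q_{n+1}/q_{n+2}$, both positive. The one genuinely clever point — and it is a small one — is noticing that the two-consecutive lemma already forces the intervening partial quotient to equal $1$, so that ``three in a row'' collapses immediately onto the golden-ratio configuration, where the rationality of a ratio of consecutive {\sc rcf}-denominators delivers the contradiction.
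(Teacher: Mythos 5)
Your proof is correct, and it takes a genuinely different route from the one the paper leans on. The paper does not reprove Borel's theorem classically: it cites \cite{B1903} and then notes, in the remark following the computations \eqref{Delta_1}--\eqref{Delta_3}, that the theorem drops out of the explicit determination of the induced system on $\Delta=g^{-1}([0,1/\sqrt{5}))$ --- those computations show $\Delta_n=\varnothing$ for $n>3$, i.e.\ every $z\in\Delta$ has hitting time $j(z)\le 3$, and since $(x,0)\in\Delta$ and $\Theta(x,p_m/q_m)=g(\G^{m+1}(x,0))$ by Proposition~\ref{Theta_and_g}, the orbit can never avoid $\Delta$ for three consecutive steps, which is exactly the claim. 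You instead give the classical arithmetic argument: in your coordinates $u=\alpha_{m+1}$, $v=q_{m-1}/q_m$ one has $\theta_{m-1}=uv/(u+v)$ and $\theta_m=1/(u+v)$ (these are just $g(z_m)$ and $g(z_{m+1})$, so your identities are consistent with the paper's framework and your derivations check out, including the index shifts and the $m=n+1$, $n=0$ edge case), your two-coefficient lemma correctly forces $a_{m+1}=1$, $u\le\varphi$ and $v\in[u/(\sqrt{5}u-1),\,\sqrt{5}-u]$, and the three-in-a-row assumption then collapses everything onto $\alpha_{n+2}=\varphi$ and $q_n/q_{n+1}=1/\varphi$, contradicting the rationality of $q_n/q_{n+1}$. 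As for what each approach buys: the paper's computation is not merely a proof of Borel but simultaneously the construction it actually needs --- the regions $\Delta_1,\Delta_{2,1},\Delta_{2,2},\Delta_3$ and the matrices $M_\Delta$ driving the Hurwitz--Borel {\sc socf} algorithm --- and it records \emph{which} of the three convergents succeeds via the region entered; your proof is elementary and self-contained, requires nothing beyond the standard complete-quotient identity, and makes transparent why $1/\sqrt{5}$ is the critical constant, since the admissible interval for $q_n/q_{n+1}$ shrinks to the single irrational point $1/\varphi$ precisely at the golden-ratio configuration, which also illuminates the optimality clause of Theorem~\ref{thm: Hurwitz}. (Your lemma is the $1/\sqrt{5}$-analogue of Vahlen's theorem on two consecutive convergents, and its conclusion $a_{m+1}=1$ is the arithmetic shadow of the paper's containments $\Delta_2\subset V_1\cup V_2$ and $\Delta_3\subset V_1\cap\G^{-1}(V_1)$.)
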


Setting $\Delta=g^{-1}([0,1/\sqrt{5}))$ (see the left-hand side of Figure \ref{hurwitz_fig}), we obtain from Proposition \ref{Theta_and_g} and Theorems \ref{eps_C_so_thm}, \ref{thm: Legendre} and \ref{thm: Hurwitz} the following:
\begin{cor}\label{hurwitz_socf}
With $\Delta$ as above, $x=[\beta_0^\Delta;\alpha_0^\Delta/\beta_1^\Delta,\alpha_1^\Delta/\beta_2^\Delta,\dots]$ is defined \emph{for all} irrational $x\in (0,1)$ and is a.s.~$(1/\sqrt{5},\sqrt{5}\log2)$-superoptimal.  Moreover, the convergents $P_k^\Delta/Q_k^\Delta$ are precisely the rationals $p/q$---ordered with increasing denominators---satisfying Theorem \ref{thm: Hurwitz}.
\end{cor}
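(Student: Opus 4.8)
The plan is to deduce the three assertions of the corollary — that $x=[\beta_0^\Delta;\alpha_0^\Delta/\beta_1^\Delta,\alpha_1^\Delta/\beta_2^\Delta,\dots]$ is defined for every irrational $x\in(0,1)$, that it is a.s.\ $(1/\sqrt5,\sqrt5\log2)$-superoptimal, and that its convergents are exactly the $1/\sqrt5$-approximations of $x$ — from results already established, taking $\Delta=g^{-1}([0,1/\sqrt5))$ with $g$ as in \eqref{g}.

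For the first assertion, note that by Proposition \ref{Theta_and_g} one has, for $m\ge1$ and $z=(x,0)$, that $\G^m(z)\in\Delta$ if and only if $\Theta(x,p_{m-1}/q_{m-1})<1/\sqrt5$. Since $1/\sqrt5<1/2$, Legendre's theorem (Theorem \ref{thm: Legendre}) guarantees that every reduced rational with $\Theta(x,p/q)<1/\sqrt5$ is an {\sc rcf}-convergent of $x$, while Hurwitz's theorem (Theorem \ref{thm: Hurwitz}) supplies infinitely many such rationals; being distinct, they constitute infinitely many distinct convergents $p_n/q_n$, so the $\G$-orbit of $(x,0)$ enters $\Delta$ infinitely often and the expansion is defined for all irrational $x\in(0,1)$.

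For the second assertion, I would verify the hypotheses of Theorem \ref{eps_C_so_thm} with $\varepsilon=1/\sqrt5$ and $C=\sqrt5\log2$: the boundary of $\Delta$ is contained in the curve $\{g=1/\sqrt5\}$ together with part of $\partial\Omega$ and is therefore Lebesgue-null (hence $\bar\nu_G$-null), so $\Delta$ is a $\bar\nu_G$-continuity set; condition (a) is immediate from $[0,1/\sqrt5)\subset[0,1/\sqrt5]$; and the computation $\bar\nu_G(g^{-1}([0,\varepsilon_0)))=\varepsilon_0/\log2$ carried out in \S\ref{Legendre continued fractions} gives $\bar\nu_G(\Delta)=1/(\sqrt5\log2)=1/C$, so condition (b) holds (with equality). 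Theorem \ref{eps_C_so_thm} then delivers $(1/\sqrt5,\sqrt5\log2)$-superoptimality for a.e.\ $x$ (condition (i) in fact holding wherever the expansion is defined).

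For the third assertion, recall from \S\ref{Inducing contractions of rcfs} that Seidel's theorem (Theorem \ref{thm: seidel}) gives $P_k^\Delta/Q_k^\Delta=p_{j_{k+1}-1}/q_{j_{k+1}-1}$, where $j_1<j_2<\cdots$ enumerates the indices $j\ge1$ with $\G^j(z)\in\Delta$; by the first step these are precisely the $j$ with $\Theta(x,p_{j-1}/q_{j-1})<1/\sqrt5$, so the set of convergents equals $\{p_n/q_n:\Theta(x,p_n/q_n)<1/\sqrt5\}$, which by Legendre's theorem is the whole set of reduced rationals satisfying Theorem \ref{thm: Hurwitz}. Since $q_n$ is strictly increasing for $n\ge1$, and the only possible coincidence $q_0=q_1$ (occurring when $a_1=1$) cannot arise among these rationals — for $j_1=1$ would force $x=\Theta(x,0/1)<1/\sqrt5<1/2$, hence $a_1\ge2$ — the list $P_0^\Delta/Q_0^\Delta,P_1^\Delta/Q_1^\Delta,\dots$ is strictly increasing in denominator, as claimed. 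I expect this last bookkeeping point (ensuring that distinct $1/\sqrt5$-approximations have distinct denominators, so that the phrase ``ordered with increasing denominators'' is unambiguous and agrees with the indexing $k=0,1,2,\dots$) to be the only step requiring any care; everything else is a direct application of Hurwitz, Legendre, and Theorem \ref{eps_C_so_thm}.
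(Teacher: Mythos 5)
Your proposal is correct and follows exactly the route the paper intends: the corollary is stated there as an immediate consequence of Proposition \ref{Theta_and_g} (identifying visits of $\G^m(x,0)$ to $\Delta$ with $\Theta(x,p_{m-1}/q_{m-1})<1/\sqrt5$), Hurwitz plus Legendre for definedness and the characterisation of the convergents, and Theorem \ref{eps_C_so_thm} with $\bar\nu_G(\Delta)=1/(\sqrt5\log 2)$ for superoptimality. Your extra bookkeeping about strictly increasing denominators (ruling out $q_0=q_1$ among the selected convergents) is a harmless refinement of the same argument, not a different approach.
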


We consider the {\sc socf}s from Corollary \ref{hurwitz_socf} in further detail.  To do so, we wish to partition $\Delta=g^{-1}([0,1/\sqrt{5}))$ into subregions on which the matrices $M_\Delta(z)$ are constant, and for this, we need the following:

\begin{prop}\label{g(z_n)}
For any $z=(x,y)\in\Omega$ and $n>0$,
\[g(z_n)=(-1)^n\frac{(p_{n-1}-q_{n-1}x)(p_{n-1}y+q_{n-1})}{1+xy},\]
where $z_n=\G^n(z)$, $p_{n-1}=p_{n-1}(x)$ and $q_{n-1}=q_{n-1}(x)$.  
\end{prop}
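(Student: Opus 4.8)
The plan is to compute $g(z_n)$ by direct substitution from the closed form of $\G^n$ recorded in \eqref{G^n_in_M}. Writing $M_{[1,n]}=\bigl(\begin{smallmatrix}p_{n-1}&p_n\\ q_{n-1}&q_n\end{smallmatrix}\bigr)$ and recalling $\det M_{[1,n]}=(-1)^n$, I would first note that \eqref{G^n_in_M} gives $z_n=(x_n,y_n)$ with
\[x_n=M_{[1,n]}^{-1}\cdot x=\frac{q_nx-p_n}{p_{n-1}-q_{n-1}x}\qquad\text{and}\qquad y_n=M_{[1,n]}^T\cdot y=\frac{p_{n-1}y+q_{n-1}}{p_ny+q_n},\]
being slightly careful that the scalar $1/(-1)^n$ appearing in the explicit inverse matrix has no effect on the M\"obius action.

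Next I would substitute these into $g(z_n)=y_n/(1+x_ny_n)$. The one genuine calculation is to simplify $1+x_ny_n$: placing it over the common denominator $(p_{n-1}-q_{n-1}x)(p_ny+q_n)$, the numerator becomes
\[(p_{n-1}-q_{n-1}x)(p_ny+q_n)+(q_nx-p_n)(p_{n-1}y+q_{n-1}).\]
Upon expansion the $y$-monomials cancel against each other, the $x$-monomials cancel against each other, and the surviving constant and $xy$ terms combine into $(p_{n-1}q_n-p_nq_{n-1})(1+xy)=(-1)^n(1+xy)$; that is,
\[1+x_ny_n=\frac{(-1)^n(1+xy)}{(p_{n-1}-q_{n-1}x)(p_ny+q_n)}.\]

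Dividing $y_n$ by this last expression, the factor $p_ny+q_n$ cancels and, since $1/(-1)^n=(-1)^n$, one is left with
\[g(z_n)=\frac{(p_{n-1}y+q_{n-1})(p_{n-1}-q_{n-1}x)}{(-1)^n(1+xy)}=(-1)^n\frac{(p_{n-1}-q_{n-1}x)(p_{n-1}y+q_{n-1})}{1+xy},\]
which is the asserted identity. There is no serious obstacle: the only points requiring care are the sign $\det M_{[1,n]}=(-1)^n$ (so that, with the entry convention $M_{[1,n]}=\bigl(\begin{smallmatrix}p_{n-1}&p_n\\ q_{n-1}&q_n\end{smallmatrix}\bigr)$, one has $p_{n-1}q_n-p_nq_{n-1}=(-1)^n$) and the bookkeeping in the expansion above. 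As a sanity check, setting $y=0$ recovers $g(z_n)=(-1)^nq_{n-1}(p_{n-1}-q_{n-1}x)=q_{n-1}^2|x-p_{n-1}/q_{n-1}|=\Theta(x,p_{n-1}/q_{n-1})$, in agreement with Proposition \ref{Theta_and_g}. One could alternatively argue by induction on $n$ using $z_{n+1}=\G(z_n)$ and \eqref{NE_G}, but the direct substitution via \eqref{G^n_in_M} is shorter and avoids setting up the inductive step.
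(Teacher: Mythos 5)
Your proposal is correct and follows exactly the paper's route: the paper also deduces the identity by substituting $z_n=\bigl(M_{[1,n]}^{-1}\cdot x,\,M_{[1,n]}^{T}\cdot y\bigr)=\bigl(\tfrac{q_nx-p_n}{p_{n-1}-q_{n-1}x},\,\tfrac{p_{n-1}y+q_{n-1}}{p_ny+q_n}\bigr)$ into $g$ and using $\det M_{[1,n]}=(-1)^n$, leaving the algebra as a "straightforward calculation" that you have simply written out in full (and verified correctly, including the sign convention $p_{n-1}q_n-p_nq_{n-1}=(-1)^n$).
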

\begin{proof}
The claim follows from a straightforward calculation using the definition of $g$ and the facts that 
\[z_n=\left(M_{[1,n]}^{-1}\cdot x,M_{[1,n]}^T\cdot y\right)=\left(\frac{q_nx-p_n}{p_{n-1}-q_{n-1}x},\frac{p_{n-1}y+q_{n-1}}{p_ny+q_n}\right)\]
and $\det M_{[1,n]}=(-1)^n$.
\end{proof}

\begin{remark}
We briefly remark that using the previous proposition, Proposition \ref{Theta_and_g}, and the geometry of the image of a general region $\Delta\subset\Omega$ under the map $z\mapsto (g(z_j^\Delta))_{j=0}^{k-1}\in[0,1]^k$, one can obtain information about $k$ consecutive approximation coefficients $\Theta(x,P_{n+j}^\Delta/Q_{n+j}^\Delta)$, $j\in\{0,\dots,k-1\}$.  For more on this (in the broader context of contracted Farey expansions) and its relation to classical results of, e.g., Vahlen (\cite{V95}), see \S5.3 of \cite{DKS2025} and \S1.8.5 of \cite{S2025}.
\end{remark}

Now for each $n>0$, let $\Delta_n=\Delta\cap j^{-1}(\{n\})$ be the subset of points $z\in\Delta$ whose hitting time is $j(z)=n$.  By Proposition \ref{g(z_n)},
\begin{multline*}
\Delta_n=\{z\in\Delta\setminus \cup_{k=1}^{n-1}\Delta_k\ |\ g(z_n)<1/\sqrt{5}\}\\
=\left\{z\in\Delta\setminus \cup_{k=1}^{n-1}\Delta_k\ \Big|\ (-1)^n\frac{(p_{n-1}-q_{n-1}x)(p_{n-1}y+q_{n-1})}{1+xy}<1/\sqrt{5}\right\}.
\end{multline*}
Since for any $x\in [0,1)$ we have $p_0=0$ and $q_0=1$, we find
\begin{equation}\label{Delta_1}
\Delta_1=\left\{z\in \Delta\ \Big|\ \frac{x}{1+xy}<1/\sqrt{5}\right\}=\left\{z\in \Delta\ \Big|\ x<\frac{1}{\sqrt{5}-y}\right\}.
\end{equation}
Note that $\Delta_2\subset \Delta\setminus \Delta_1$ is contained in the union of vertical rectangles $V_1\cup V_2$; if $z\in V_1$, then $p_1=1$ and $q_1=1$, so
\begin{equation}\label{Delta_2,1}
\Delta_{2,1}\coloneqq \Delta_2\cap V_1=\left\{z\in (\Delta\setminus \Delta_1)\cap V_1\ \Big|\ \frac{(1-x)(y+1)}{1+xy}<1/\sqrt{5}\right\}=\left\{z\in \Delta \ \Big|\ \frac{\sqrt{5}y+\sqrt{5}-1}{(\sqrt{5}+1)y+\sqrt{5}}<x\right\},
\end{equation}
while if $z\in V_2$, then $p_1=1$ and $q_1=2$, so
\begin{equation}\label{Delta_2,2}
\Delta_{2,2}\coloneqq\Delta_2\cap V_2=\left\{z\in (\Delta\setminus \Delta_1)\cap V_2\ \Big|\ \frac{(1-2x)(y+2)}{1+xy}<1/\sqrt{5}\right\}=\left\{z\in \Delta \ \Big|\ \frac{1}{\sqrt{5}-y}\le x\le 1/2\right\}.
\end{equation}
Next, notice that $\Delta_3\subset \Delta\setminus (\Delta_1\cup \Delta_2)\subset V_1\cap \G^{-1}(V_1)$.  This implies that for any $z\in \Delta_3$, $p_2=1$ and $q_2=2$, so 
\begin{equation}\label{Delta_3}
\Delta_3=\left\{z\in\Delta\setminus(\Delta_1\cup \Delta_2)\ \Big|\ \frac{(2x-1)(y+2)}{1+xy}<1/\sqrt{5}\right\}=\Delta\backslash (\Delta_1\cup\Delta_2),
\end{equation}
and $\Delta_n=\varnothing$ for $n>3$.  See Figure \ref{hurwitz_fig}.

\begin{figure}[t]
\centering
\includegraphics[width=.45\textwidth]{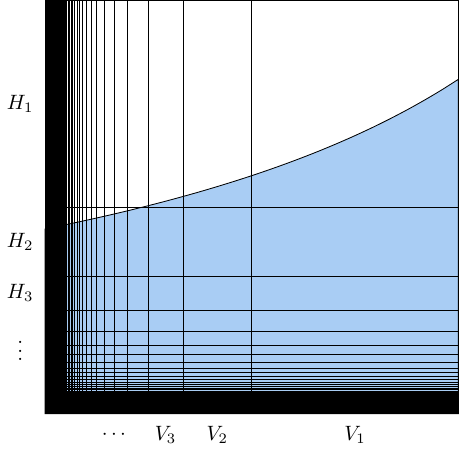}
\includegraphics[width=.45\textwidth]{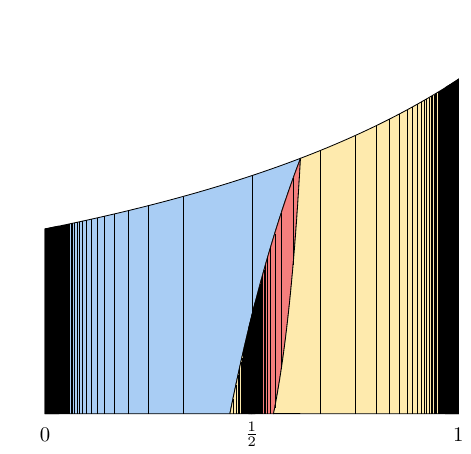}
\caption{Left: The region $\Delta=g^{-1}([0,1/\sqrt{5}))\subset\Omega$.  Right: The regions $\Delta_1(a)$ (blue), $\Delta_{2,1}(a)$ (tan and right of $x=1/2$), $\Delta_{2,2}(a)$ (tan and left of $x=1/2$) and $\Delta_3(a)$ (red).} 
\label{hurwitz_fig}
\end{figure}

\begin{remark}
The fact that the hitting times $j(z)$ are bounded by $3$ implies Borel's theorem (Theorem \ref{thm: Borel}) and hence also (the first statement of) Hurwitz's theorem (Theorem \ref{thm: Hurwitz}). 
\end{remark}

Using \eqref{Delta_1}--\eqref{Delta_3} and the facts that $\Delta_{2,1}\subset V_1$, $\Delta_{2,2}\subset V_2$ and $\Delta_3\subset V_1\cap\G^{-1}(V_1)$, we further refine the sets $\Delta_n$ for each positive integer $a$:
\begin{equation}\label{Delta_1(a)}
\Delta_1(a)\coloneqq \Delta_1\cap V_a= \left\{z\in \Delta_1\ \middle|\ \left\lfloor \frac1x\right\rfloor=a\right\},
\end{equation}
\begin{equation}\label{Delta_2,1(a)}
\Delta_{2,1}(a)\coloneqq \Delta_{2,1}\cap \G^{-1}(V_a)=\left\{z\in \Delta_{2,1}\ \middle|\ \left\lfloor \frac{x}{1-x}\right\rfloor=a\right\},
\end{equation}
\begin{equation}\label{Delta_2,2(a)}
\Delta_{2,2}(a)\coloneqq \Delta_{2,2}\cap \G^{-1}(V_a)=\left\{z\in\Delta_{2,2}\ \middle|\ \left\lfloor\frac{x}{1-2x}\right\rfloor=a\right\}
\end{equation}
and\footnote{We remark that $\Delta_{2,2}(a)=\varnothing$ for $a=1,2,3$.  Indeed, for $z=(x,y)\in\Delta$, $\left\lfloor x/(1-2x)\right\rfloor\le 3$ if and only if $x<4/9$, but the set of points $z$ for which $x<4/9$ belongs to $\Delta_1$.  One can verify that all other sets in \eqref{Delta_1(a)}--\eqref{Delta_3(a)} are non-empty.}
\begin{equation}\label{Delta_3(a)}
\Delta_3(a)\coloneqq \Delta_3\cap \G^{-2}(V_a)=\left\{z\in\Delta_3\ \middle|\ \left\lfloor\frac{1-x}{2x-1}\right\rfloor=a\right\};
\end{equation}
see the right-hand side of Figure \ref{hurwitz_fig}.  We then find
\begin{equation}\label{M_Delta-hurwitz}
M_\Delta(z)=\begin{pmatrix}r_\Delta(z) & p_\Delta(z)\\ s_\Delta(z) & q_\Delta(z)\end{pmatrix}=\begin{cases}
\begin{pmatrix}0 & 1\\ 1 & a\end{pmatrix}, & z\in \Delta_1(a),\\
\begin{pmatrix}1 & a\\ 1 & a+1\end{pmatrix}, & z\in \Delta_{2,1}(a),\\
\begin{pmatrix}1 & a\\ 2 & 2a+1\end{pmatrix}, & z\in \Delta_{2,2}(a),\\
\begin{pmatrix}1 & a+1\\ 2 & 2a+1\end{pmatrix}, & z\in \Delta_3(a).
\end{cases}
\end{equation}
Using this, we may compute the expansion $x=[\beta_0^\Delta;\alpha_0^\Delta/\beta_1^\Delta,\alpha_1^\Delta/\beta_2^\Delta,\dots]$ of any irrational $x\in (0,1)$, as demonstrated in the following example.

\begin{eg}\label{eg: hurwitz}
Let $z=(x,0)$ with $x=\pi-3$.  Using \eqref{G_Delta}, \eqref{Delta_1(a)}--\eqref{Delta_3(a)} and \eqref{M_Delta-hurwitz}, one computes the subsequent regions entered by the $\G_\Delta$-orbit $z_k^\Delta=\G_\Delta^k(z)$ of $z$:
\begin{align*}
z_0^\Delta&\in \Delta_1(7), & z_1^\Delta&\in \Delta_1(15), & z_2^\Delta&\in \Delta_{2,1}(292), & z_3^\Delta&\in \Delta_{2,1}(1), & z_4^\Delta&\in \Delta_{2,1}(2), & z_5^\Delta&\in \Delta_{2,1}(3),\\
z_6^\Delta&\in \Delta_{2,1}(14), & z_7^\Delta&\in \Delta_1(2), & z_8^\Delta&\in \Delta_3(2), & z_9^\Delta&\in \Delta_1(2), & z_{10}^\Delta&\in \Delta_1(2), & z_{11}^\Delta&\in \Delta_1(2),\dots.
\end{align*}
Using this, Proposition \ref{prop: induced_determines_cf_alg} and \eqref{M_Delta-hurwitz}, we compute
\begin{multline*}
x=[\beta_0^\Delta;\alpha_0^\Delta/\beta_1^\Delta,\alpha_1^\Delta/\beta_2^\Delta,\dots]\\
=[0;1/7,1/16,-1/294,-1/3,-1/4,-1/5,-1/15,1/(5/2),(1/2)/5,2/2,1/2\dots].
\end{multline*}
The convergents,
\begin{multline*}
\left(\frac{P_k^\Delta}{Q_k^\Delta}\right)_{k\ge 0}=\Bigg(\frac01,\frac17,\frac{16}{113},\frac{4703}{33215},\frac{14093}{99532},\frac{51669}{364913},\frac{244252}{1725033},\\
\frac{3612111}{25510582},\frac{18549059}{131002976},\frac{48178703}{340262731},\frac{114906465}{811528438},\frac{277991633}{1963319607},\dots\Bigg),
\end{multline*}
are precisely those rationals $p/q$ satisfying Hurwitz's theorem (Theorem \ref{thm: Hurwitz}).  The corresponding approximation coefficients are plotted in Figure \ref{fig: approx_coefs}.
\end{eg}


\bibliography{bib/bib}
\bibliographystyle{acm}

\end{document}